\theoremstyle{plain}
\newtheorem{theorem}{Theorem}[section]
\newtheorem{lemma}[theorem]{Lemma}
\newtheorem{proposition}[theorem]{Proposition}
\newtheorem{algorithm}[theorem]{Algorithm}
\newtheorem{remark}[theorem]{Remark}
\theoremstyle{remark}
\newcommand{\I}{\mathcal{I}}
\newcommand{\exd}{\gamma^*_e}
\DeclareMathOperator{\dist}{d}
\title{On exponential domination of the consecutive circulant graph} 
\author{Michael Dairyko$^1$ \and Michael Young$^1$}
\begin{document}
\maketitle
\footnotetext[1]{Department of Mathematics, Iowa State University, Ames, IA 50011, USA. (mdairyko, myoung) @iastate.edu}

\begin{abstract}
For a graph $G,$ we consider $D \subset V(G)$ to be a porous exponential dominating set if $1\le \sum_{d \in D}$ $\left( \tfrac{1}{2} \right)^{\dist(d,v) -1}$ for every $v \in V(G),$ where $\dist(d,v)$ denotes the length of the smallest $dv$ path. Similarly, $D \subset V(G)$ is a non-porous exponential dominating set is $1\le \sum_{d \in D} \left( \tfrac{1}{2} \right)^{\overline{\dist}(d,v) -1}$ for every $v \in V(G),$ where $\overline{\dist}(d,v)$ represents the length of the shortest $dv$ path with no internal vertices in $D.$ The porous and non-porous  exponential dominating number of $G,$ denoted $\exd(G)$ and $\gamma_e(G),$ are the minimum cardinality of a porous and non-porous  exponential dominating set, respectively. The consecutive circulant graph, $C_{n, [\ell]},$ is the set of $n$ vertices such that vertex $v$ is adjacent to $v \pm i \mod n$ for each $i \in [\ell].$ In this paper we show $\gamma_e(C_{n, [\ell]}) = \exd(C_{n, [\ell]}) = \left\lceil \tfrac{n}{3\ell +1} \right\rceil.$\\

\noindent \textbf{Key words:}  domination; generalized circulant graph; exponential domination; porous exponential domination\\

\noindent\textbf{AMS 2010 Subject Classification:} Primary 05C69; Secondary 05C12
\end{abstract}

\begin{section}{Introduction}
Classical domination in graphs is a well studied area within graph theory. For a graph $G,$ we consider $D\subset V(G)$ to be a \emph{dominating set} if every member of $V(G) \setminus D$ is adjacent to at least one member of $D.$ The \emph{domination number} of $G,$ denoted $\gamma(G),$ is the minimum cardinality of a dominating set. Define $w:V(G)\times V(G)\rightarrow \mathbb{R}$ to be a \emph{weight function} of $G.$ For $u,v \in V(G)$, we say that $u$ assigns weight $w(u,v)$ to $v$. Denote the weight assigned by a set of vertices $D$ to $v$ as $w(D,v)  := \sum_{d \in D} w(d,v),$ and similarly, the weight assigned by $d\in D$ to $H \subset V(G)$ as $w(d,H)  := \sum_{h \in H} w(d,h).$ The pair $(D,w)$ dominates $G$ if $w(D, v) \ge 1$ for every $v\in V(G).$ In the contex of classical domination, the pair $(D,w)$ dominates $G$ where $D$ is a dominating set and $w$ is the following function: \[w(u,v) = \begin{cases}
1 & \text{ if } u\in D \text{ and } uv \in E(G)\\
0 & \text{ otherwise.}
\end{cases}\]

A variant of domination, called \emph{exponential domination}, was first introduced in \cite{dankel}. Their motivation was to create a framework for a particular type of distance domination, one that would better model real world situations in which the influence of a selected vertex on other vertices decreases exponentially as their distance increases. There are two types of exponential domination;  \emph{non-porous} and \emph{porous}. In \emph{non-porous exponential domination,} exponential dominating vertices obstruct the influence of each other, whereas no there is no such obstruction in \emph{porous exponential domination.} More formally, the weight function for non-porous exponential domination is  \[w(u,v) = \left(\frac{1}{2} \right)^{\overline{\dist}(u,v) -1},\] where $\overline{\dist}(u,v)$ represents the length of the shortest $uv$ path that does not contain any internal vertices that are in the non-porous exponential dominating set. The \emph{non-porous exponential domination number} of $G$, denoted by $\gamma_e(G),$ is the cardinality of a minimum non-porous exponential dominating set. The weight function for porous exponential domination is  \[w^*(u,v) = \left(\frac{1}{2} \right)^{\dist(u,v) -1},\] where $\dist(u,v)$ represents the length of the shortest $uv$ path. The \emph{porous exponential domination number} of $G$, denoted by $\gamma^*_e(G),$ is the cardinality of a minimum porous exponential dominating set. 
 
Notice that exponential domination differs from the other variants of domination discussed in \cite{Haynes} due to the global influence exponential dominating vertices have on $V(G),$ whereas the dominating vertices of the variants of domination have a more local influence. The relatively few results \cite{anderson,Ayta,Bessy1,Bessy,dankel,Henning1,Henning} in this area has been attributed to the difficulty of working within the global nature of exponential domination. On relating exponential domination to classical domination, it is known that \cite{dankel}
\begin{equation}\label{PNPinequality}
 \exd(G) \le \gamma_e(G)  \le \gamma(G).
 \end{equation}
Let $[n] = \{ 1,2, \ldots, n\}.$ The \emph{consecutive circulant graph}, $C_{n , [\ell ]},$ is the set of $n$ vertices such that vertex $v$ is adjacent to vertex $v \pm i \mod n$ for each $i \in [\ell].$ Notice that $C_{n, [1]}$ is equivalent to $C_n$ and $C_{n,\left[\left\lfloor \frac{n}{2} \right\rfloor\right]}$ is equivalent to the complete graph $K_n.$ The following proposition gives an explicit formula for $\gamma_e(C_n).$ 

\begin{proposition}\cite{dankel} \label{dankelprop} { \rm For every integer $n\ge 3,$
\[ \gamma_e (C_n) = \begin{cases}
2 & \text{ if } n = 4 \\
\left\lceil \frac{n}{4} \right\rceil & \text{ if } n \neq 4.  
\end{cases} \]}
\end{proposition}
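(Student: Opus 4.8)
The plan is to establish matching upper and lower bounds on the non-porous number $\gamma_e(C_n)$, treating $n=4$ as a genuine exception. Throughout I would label the vertices $0,1,\dots,n-1$ in cyclic order and use the feature that in $C_n=C_{n,[1]}$ every edge joins \emph{consecutive} vertices, so a path cannot skip over a vertex; this is the crucial property that forces the non-porous weight received inside a gap to depend only on the two dominators bounding that gap.

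For the upper bound I would exhibit a non-porous exponential dominating set of the claimed size. Choose $s=\lceil n/4\rceil$ vertices placed cyclically so that the $s$ gaps between consecutive chosen vertices all have length at most $4$; this is realizable precisely because $s\le n\le 4s$. It then suffices to analyze one gap in isolation: if consecutive dominators sit at $0$ and $g$ with $1\le g\le 4$ and no dominator strictly between, the interior vertex at position $j$ receives weight $(\tfrac12)^{j-1}+(\tfrac12)^{g-j-1}$, and a finite check over $g\in\{1,2,3,4\}$ shows this is always at least $1$, with equality only when $g=4$ at the midpoint $j=2$. Hence every vertex is dominated and $\gamma_e(C_n)\le\lceil n/4\rceil$ whenever this construction uses at least two dominators, i.e.\ for $n\ge5$; the cases $n=3$ (where $C_3=K_3$ and one vertex suffices) and $n=4$ (where two antipodal vertices work but a single one does not) would be handled directly, giving $\gamma_e(C_4)\le2$.

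The heart of the argument is the lower bound, and the key is a \emph{sealing} property special to $\ell=1$. Let $D$ be any non-porous exponential dominating set with $|D|=s\ge2$, listed cyclically with gaps $g_1,\dots,g_s$ summing to $n$. I claim a vertex $v$ lying strictly inside a gap receives positive weight only from the two dominators bounding that gap. Indeed, on the cycle each of the two arcs from any other dominator $u$ to $v$ must enter the open gap through one of its endpoints, both of which lie in $D$; since an edge of $C_n$ only connects adjacent vertices, such a path has an internal vertex in $D$ and is inadmissible for $\overline{\dist}$. Thus the weight at an interior vertex of a gap of length $g$ is again exactly $(\tfrac12)^{j-1}+(\tfrac12)^{g-j-1}$, and the same computation read in reverse shows that $g\ge5$ forces some interior vertex (for instance $j=2$) to receive only $\tfrac34<1$. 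Hence every gap satisfies $g_i\le4$, so $n=\sum_i g_i\le4s$ and $s\ge\lceil n/4\rceil$.

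It remains to rule out small dominating sets, which is also where the exception surfaces. A single dominator $d$ assigns weight $1$ to its two neighbours and at most $\tfrac12$ to any vertex at distance $\ge2$, so $\{d\}$ dominates $C_n$ only when every vertex is within distance $1$ of $d$, i.e.\ when $n\le3$; consequently $s\ge2$ for all $n\ge4$. Combining this with the gap bound gives $\gamma_e(C_n)\ge\lceil n/4\rceil$ for $n\ge5$ and $\gamma_e(C_4)\ge2$ (for $n=4$ the gap bound alone yields only $1$, which is exactly why $n=4$ must be excluded), and matching these with the constructions finishes the proof. I expect the only real subtleties to be formulating the sealing property precisely and cleanly isolating the $n=4$ case; the per-gap weight estimate itself is a short finite computation.
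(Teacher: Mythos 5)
Your proof is correct, but there is no in-paper proof to compare it to: Proposition \ref{dankelprop} is quoted from \cite{dankel} and used as a black box, so your blind attempt should be judged on its own and against the paper's general machinery. On its own it is sound, and it is genuinely different from the route the paper takes to the $\ell=1$ cycle. The engine of your argument is the sealing property: in $C_n$ the only two $uv$ paths are the two arcs, so a vertex strictly inside a gap between consecutive dominators receives nonzero non-porous weight only from the two dominators bounding that gap, and that weight is exactly $(\tfrac{1}{2})^{j-1}+(\tfrac{1}{2})^{g-j-1}$. Your finite checks are right: every gap $g\le 4$ yields weight at least $1$ at each interior vertex (with equality at $g=4$, $j=2$), any gap $g\ge 5$ yields at most $\tfrac{3}{4}$ at $j=2$, and a single dominator suffices only for $n\le 3$; together these give matching bounds $\lceil n/4\rceil$ for $n\ge 5$ plus the exceptional value $2$ at $n=4$. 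The paper, by contrast, reaches this statement for $\ell=1$ only through Theorem \ref{mainCirculantThm} --- interval partitions $\I$, the $\tfrac{6}{7}$-type estimates of Lemma \ref{newBC2}, Algorithm \ref{shiftalg}, and induction on $f^*(D,\I)+Z(D,\I)$ --- and only for $m\ge 2$, i.e.\ $n\ge 8$ when $\ell=1$; the small cases, including the exception $n=4$, are exactly what it outsources to \cite{dankel}. The trade-off is real: your sealing lemma is destroyed both by porosity (no blocking, so distant dominators always contribute and the per-gap weight is no longer a function of the gap alone) and by $\ell\ge 2$ (a path can step from $v-1$ to $v+1$ without visiting a dominator $v$), so your elementary method cannot reach $\exd(C_n)$ or any $\ell\ge 2$ case, which is precisely what the paper's heavier, blocking-free machinery buys.
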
 

No such formula has been determined for $\exd(C_n).$ In this paper, we show that that the porous and non-porous exponential domination number of $C_{n,[\ell]}$ are equivalent. Furthermore, in Theorem \ref{mainCirculantThm} when $\ell = 1$ and $m \ge 2,$ our results align with Proposition \ref{dankelprop} and fills the gap to $\exd(C_n).$ For the sake of simplicity, we will now refer to porous exponential domination as exponential domination, unless stated otherwise.

We still need a few more definitions and notation. Let $H$ be the Hamiltonian cycle of $C_{n,[\ell]},$ where the vertices $v, v+1 \mod n$ form an edge. Label the vertices of $C_{n,[\ell]}$ in the order of $H$ as $ V_H = \{0,1,\ldots, n-1\}.$ For $0\le i,j\le n-1,$ we denote $\dist_H(i,j)$ to be the length of the shortest path from $i$ to $j$ on $H$. See Figure \ref{C82} for an illustration of $C_{ 8 , [2] },$ 
\begin{figure}[htp!]
\begin{center}
\begin{tikzpicture}[scale = 1.8]
\draw  circle [radius=1cm];
\node[circle, fill = white, draw] (1) at ( 0:1cm ) {\tiny $2$};
\node[circle, fill = white, draw] (2) at ( 45:1cm ) {\tiny $1$};
\node[circle, fill = white, draw] (3) at ( 90:1cm) {\tiny $0$};
\node[circle, fill = white, draw] (4) at ( 135:1cm) {\tiny $7$};
\node[circle, fill = white, draw] (5) at ( 180:1cm) {\tiny $6$};
\node[circle, fill = white, draw] (6) at ( 225:1cm) {\tiny $5$};
\node[circle, fill = white, draw] (7) at ( 270:1cm) {\tiny $4$};
\node[circle, fill = white, draw] (8) at ( 315:1cm) {\tiny $3$};

\draw  (1)  to [bend left=45] (3) ;
\draw  (2)  to [bend left=45] (4) ;
\draw  (3)  to [bend left=45] (5) ;
\draw  (4)  to [bend left=45] (6) ;
\draw  (5)  to [bend left=45] (7) ;
\draw  (6)  to [bend left=45] (8) ;
\draw  (7)  to [bend left=45] (1) ;
\draw  (8)  to [bend left=45] (2) ;

\end{tikzpicture}
\caption{An illustration of $C_{8,[2]}$ }
\label{C82}
\end{center}
\end{figure}
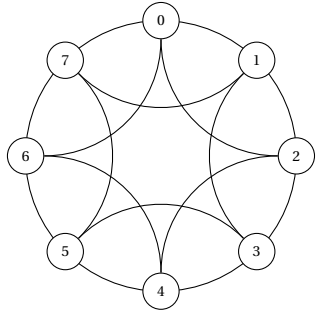 
with the defined labeling. With respect to $V(C_{n,[\ell]}),$ denote the interval $[i,j]$ as the set of increasing consecutive integers modulo $n$ from $i$ to $j.$ Let $ \I = \bigcup_{i=0}^{m-1} I_i$ be the consecutive partition around $H.$ For any exponential dominating set $D,$ let $f_k(D,\I) := | I_k \cap D |.$ Also define $z(D,\I) := \{ i : f_i(D,\I) = 0\},$ $Z(D,\I) := |z(D,\I) |,$ and $ \displaystyle f^*(D, \I ) := \max\limits_{0\le i \le m-1} f_i(D, \I).$   

Our main result is the following theorem, whose proof is shown in Section \ref{main} .
\begin{theorem}\label{mainCirculantThm}
{\rm Let $n = (3\ell + 1)m + r,$ for $0\le r \le 3\ell.$  Then \[ \exd(C_{n,[\ell]}) = \gamma_e(C_{n,[\ell]}) = \left\lceil \frac{n}{3\ell + 1} \right\rceil.\]}
\end{theorem}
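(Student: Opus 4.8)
The plan is to prove the two outer quantities in \eqref{PNPinequality} both equal $\lceil n/(3\ell+1)\rceil$ and then let $\exd \le \gamma_e$ pinch everything together: I will establish the upper bound $\gamma_e(C_{n,[\ell]}) \le \lceil n/(3\ell+1)\rceil$ for the \emph{non-porous} parameter and the lower bound $\exd(C_{n,[\ell]}) \ge \lceil n/(3\ell+1)\rceil$ for the \emph{porous} parameter; since $\exd \le \gamma_e$, this forces both to equal $\lceil n/(3\ell+1)\rceil$. The first step in either direction is to record the distance formula $\dist(u,v) = \lceil \dist_H(u,v)/\ell\rceil$, which holds because a shortest path may advance up to $\ell$ steps along $H$ per edge. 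Consequently the weight a single vertex $d$ sends to the vertex at Hamiltonian offset $t$ is $\left(\tfrac12\right)^{\lceil t/\ell\rceil-1}$, equal to $2$ at $t=0$, to $1$ for $1\le t\le\ell$, to $\tfrac12$ for $\ell+1\le t\le 2\ell$, to $\tfrac14$ for $2\ell+1\le t\le 3\ell$, and so on.

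For the upper bound I place $M=\lceil n/(3\ell+1)\rceil$ vertices around $H$ as evenly as possible, so that every gap between cyclically consecutive chosen vertices has Hamiltonian length at most $3\ell+1$; this is possible since $n \le (3\ell+1)M$. Fix any vertex $v$ and let $d_1,d_2$ be the chosen vertices flanking it on $H$, at Hamiltonian offsets $p$ and $g-p$, where $g\le 3\ell+1$ is the gap. Using the weight table above, the combined weight that $d_1$ and $d_2$ send to $v$ is at least the minimum over $1\le p\le g-1$ of $(1/2)^{\lceil p/\ell\rceil-1}+(1/2)^{\lceil (g-p)/\ell\rceil-1}$, which a short case check shows is $\ge 1$ (the extremal cases giving $1+\tfrac14$, $\tfrac12+\tfrac12$, and $\tfrac14+1$). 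Because $d_1$ and $d_2$ are the nearest chosen vertices, the shortest $d_1v$ and $d_2v$ paths along the short arc contain no internal vertex of the set, so these contributions are valid even for the non-porous weight $\overline{\dist}$; hence the set non-porously dominates and $\gamma_e \le M$.

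The lower bound is the heart of the matter. The naive estimate $\sum_{v}\sum_{d\in D} w^*(d,v) = |D|\,T$ with $T = \sum_t (1/2)^{\lceil t/\ell\rceil-1} \le 4\ell+2$ only gives $|D| \ge n/(4\ell+2)$; in fact the linear-programming relaxation that allows fractional dominators has value near $n/(4\ell+2)$, so no purely averaging argument can reach $n/(3\ell+1)$ and I must exploit that $D$ is an honest set of vertices. My plan is to work with the partition $\I = \bigcup_{i} I_i$ of $H$ into intervals of length $3\ell+1$ (treating $n=(3\ell+1)m$ first and recovering the remainder $r$ through the ceiling), and to argue by contradiction that $|D| < m$ is impossible. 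If $|D|<m$ then $Z(D,\I) \ge 1$, and more usefully a pigeonhole on $f_k(D,\I)$ produces a long run of intervals containing few dominators. For a maximal dominator-free stretch I will bound the weight reaching a central vertex: the contributions decay geometrically in Hamiltonian distance, so a stretch that is too long has an interior vertex receiving strictly less than $1$. The delicate point, and the main obstacle, is that every vertex is influenced by \emph{all} of $D$ at once, so I cannot simply discard distant dominators; I will control the aggregate of these geometric tails by a discharging scheme that charges each $d\in D$ a total of at most $3\ell+1$, with $f^*(D,\I)$ and $Z(D,\I)$ bookkeeping how concentrated and how sparse $D$ is across the partition. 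Pushing this accounting through — so that the forced over-coverage near each dominator (its self-weight is $2$, not $1$) is exactly what upgrades the bound $4\ell+2$ to $3\ell+1$ — is where the real work lies; an alternative I would keep in reserve is a smoothing argument showing that a minimum porous dominating set may be taken with balanced gaps, after which evenly spaced configurations of spacing exceeding $3\ell+1$ are directly seen to fail.
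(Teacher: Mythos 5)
Your upper bound is correct and is essentially the paper's own: evenly spaced vertices with gaps at most $3\ell+1$ dominate non-porously (this is Remark~\ref{GenintDom}), giving $\gamma_e(C_{n,[\ell]}) \le \left\lceil \tfrac{n}{3\ell+1} \right\rceil$, and \eqref{PNPinequality} then reduces everything to the porous lower bound. But the lower bound, which you correctly identify as the heart of the matter, is not proven in your proposal; it is a plan. You propose a pigeonhole on $f_k(D,\I)$, a tail estimate at a central vertex of a dominator-free stretch, and a discharging scheme charging each dominator at most $3\ell+1$, but none of this is carried out, and you say yourself that pushing the accounting through ``is where the real work lies.'' That accounting is exactly what occupies the entire technical core of the paper: the geometric-tail bounds of Lemma~\ref{newBC2}, the redistribution Algorithm~\ref{shiftalg} with Lemma~\ref{shiftalgproof}, the structural Lemmas~\ref{f*=3notDom}, \ref{f_k=Z}, \ref{locations}, \ref{2shift}, and an induction on $f^*(D,\I)+Z(D,\I)$ in Theorem~\ref{lmindom}. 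Your reserve option (a smoothing argument that minimum sets can be taken with balanced gaps) is not a shortcut either; proving that sets can be so normalized without increasing cardinality is essentially equivalent to the hard structural statement, not a substitute for it.

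There is a second, independent gap: your claim that the case $r>0$ is recovered from the case $r=0$ ``through the ceiling.'' Knowing $\exd(C_{(3\ell+1)m,[\ell]}) = m$ does not by itself exclude a porous dominating set of size $m$ in $C_{(3\ell+1)m+r,[\ell]}$; there is no obvious monotonicity of $\exd$ in $n$ that converts the exact value at multiples of $3\ell+1$ into a strict lower bound in between. The paper closes this by proving more in the divisible case than the value alone, namely that the minimum set is \emph{unique up to isomorphism} (the evenly spaced set), and then arguing for $r>0$ as follows: if $|D|\le m$, rotate the partition so that $D$ avoids an interval of length $r$, contract via the map $\varphi$ onto $C_{(3\ell+1)m,[\ell]}$ (distances only shrink, so $D$ still dominates there), conclude from uniqueness that $D$ must be evenly spaced, and then repeat with a shifted partition to obtain a second image whose spacing is $3\ell+1-r$ somewhere, contradicting uniqueness. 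Your plan never mentions, let alone proves, this uniqueness, so even granting your discharging scheme for $r=0$, the theorem as stated would not follow.
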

We now give a brief outline of the proof for Theorem \ref{mainCirculantThm}. Through the use of the remarks and lemmas in Section \ref{minor}, we show that the above equality holds when $(3\ell+1)$ divides $n$. Additionally, the structure of the exponential domination set in this case is shown to be unique up to isomorphism. The main result is proven by exploiting the uniqueness of the exponential domination set when $(3\ell+1)$ divides $n,$ and applying (\ref{PNPinequality}).
\end{section}

%---------------------------------------------------------------------------------------------------------------------------------------------------------------------------------------------------------------------------------------

%%%%%%%%%%%%%%%%%%%%%%%%%%%%%%%%%%%%%%%%%%%%%%
%%%%%%%% CIRCULANT GRAPHS %%%%%%%%%%%%%%%%%%%%%%%%%%
%%%%%%%%%%%%%%%%%%%%%%%%%%%%%%%%%%%%%%%%%%%%%%

\begin{section}{Exponential domination of consecutive circulants} \label{Circulant}
In this section we prove Theorem \ref{mainCirculantThm}, which determines the explicit non-porous and porous exponential domination number of $C_{n,[\ell]},$ and shows that these numbers are equivalent. In the first subsection, we remark on minor results and provide lemmas used to prove the main results. The main results and their proofs are given in the second subsection. 

\begin{subsection}{Minor Results and Lemmas}\label{minor}

The following remarks and lemmas appear in the order they are referenced in the proofs of Theorems \ref{lmindom} and \ref{mainCirculantThm}.

\begin{remark}\label{computeDist}{\rm
Consider $u,v \in V(C_{n,[\ell]}).$ Throughout the paper, there will be a need to refer to $\dist(u,v).$ Notice that, \[ \dist(u,v) = \left\lceil \frac{\dist_H(u,v)}{\ell} \right\rceil.\]} 
\end{remark}

\begin{remark}\label{GenintDom}{\rm 
Suppose that $[a,b]$ is an interval on $H$ such that $a< b$ and $\dist_H(a,b) \le 3\ell +1.$ For notational simplicity, consider the interval $[0,3\ell +1].$ Then $0$ and $3\ell + 1$ dominates $[1, \ell ]$ and $[2\ell + 1, 3\ell ],$ respectively, and both $0$ and $3\ell + 1$ contribute weight $\tfrac{1}{2}$ to $[ \ell + 1, 2\ell ].$ Therefore $[0, 3\ell +1]$ is exponentially dominated by $0$ and $3\ell +1.$ This shows that $a,b$ exponentially dominates $[a,b].$}
\end{remark}

\begin{remark}\label{DminSet}{\rm
For $n = (3\ell +1)m,$ with $m\ge 2,$ let $D$ be a minimum exponential dominating set for $C_{n,[\ell]}.$ Fix vertex $i \in V_H$ and construct set $D^* = \{ i + (3\ell+1)t \mod n : 0 \le t \le m-1 \}.$ Through the application of Remark \ref{GenintDom}, $D^*$ forms an exponential dominating set for which $|D| \le |D^*| = m .$ 
}\end{remark}

\begin{remark}\label{f*Zeros}{ \rm
Let $D$ be a minimum exponential dominating set for $C_{n,[\ell]}.$ From Remark \ref{DminSet}, we have that $|D| \le m.$ Therefore, for every interval $I_k$ with $f_k(D)>1,$ there must exist $f_k(d) -1$ distinct intervals that contain no members of $D.$ This shows that $\sum_{k=0}^{m-1} f_k(D) \le m. $}
\end{remark}

In many of the cases for the proof of Theorem \ref{mainCirculantThm}, an exponential dominating set $D$ can be reduced to having at least one interval with no members of $D$ and the remaining intervals with exactly one member of $D.$ The following lemma gives results on $D$ in this situation. 
\begin{lemma}\label{newBC2}
{ \rm  Let $D \subset V(C_{n,[\ell]})$ and $\I$ be a partition such that $\I = \bigcup_{i=0}^{m-1} I_i, $ where $I_i = [ (3\ell +1)i, (3\ell+1) i + 3\ell]$ such that $f^*(D,\I) =1$ and $Z(D,\I) \ge 1.$ Let $d_i := I_i \cap D$ for $0\le i \le m-1,$ and consider $z \in z(D,\I).$ Then,
\begin{enumerate}[(i)]
\item\label{condition1}  $w^*(D , (3\ell +1)z +\ell) < \frac{6}{7}$ and  $w^*(D , (3\ell +1)z +2\ell) < \frac{6}{7},$
\item\label{condition2} $w^*(D \setminus d_k , (3\ell +1)z +\ell) < \frac{17}{28}$ and $w^*(D \setminus d_k , (3\ell +1)z +2\ell) < \frac{5}{14},$ for $k \equiv z +1 \mod m,$
\item\label{condition3} $w^*(D \setminus d_k , (3\ell +1)z +\ell) < \frac{5}{14}$ and $w^*(D \setminus d_k , (3\ell +1)z +2\ell) < \frac{17}{28},$ for $k \equiv z -1 \mod m.$ 
\item\label{condition4} $w^*(D \setminus d_k , (3\ell +1)z +\ell) < \frac{377}{448}$ and $w^*(D \setminus d_k , (3\ell +1)z +2\ell) < \frac{185}{224},$ for $k \equiv z +2 \mod m,$
\item\label{condition5} $w^*(D \setminus d_k , (3\ell +1)z +\ell) < \frac{185}{224}$ and $w^*(D \setminus d_k , (3\ell +1)z +2\ell) < \frac{377}{448},$ for $k \equiv z -2 \mod m.$ 
\end{enumerate} }
\end{lemma}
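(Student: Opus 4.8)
The plan is to fix the empty block $I_z$, set $s=(3\ell+1)z$ for its first vertex, and estimate the weight reaching the two interior points $p\in\{s+\ell,\,s+2\ell\}$ by adding up the contributions of the remaining blocks one block at a time. The basic input is Remark~\ref{computeDist}: since $\dist(u,v)=\lceil \dist_H(u,v)/\ell\rceil$, moving from block $I_{z+t}$ to $I_{z+t+1}$ raises the graph distance to a fixed point of $I_z$ by at least $3$, so the largest weight a block can send to $p$ is multiplied by at most $(\tfrac12)^3=\tfrac18$ from one block to the next. Because $f^*(D,\I)=1$, each block other than $I_z$ contains at most one vertex of $D$, and that vertex sends $p$ at most the weight of the point of the block nearest $p$; I will call this the block's \emph{closest weight}.

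For part~(\ref{condition1}) I would separate the blocks lying clockwise from $I_z$ from those lying counterclockwise and majorize each side by a geometric series of ratio $\tfrac18$. Taking $p=s+\ell$, the nearest vertex of $I_{z+t}$ lies at $\dist_H=(3\ell+1)t-\ell$, so $\dist\ge 3t$ and its closest weight is at most $(\tfrac12)^{3t-1}$; the nearest vertex of $I_{z-t}$ lies at $\dist_H=(3\ell+1)t-2\ell$, so $\dist\ge 3t-1$ and its closest weight is at most $(\tfrac12)^{3t-2}$. Summing, \[ w^*(D,s+\ell)\ \le\ \sum_{t\ge1}\Big(\tfrac12\Big)^{3t-1}+\sum_{t\ge1}\Big(\tfrac12\Big)^{3t-2}=\tfrac{2}{7}+\tfrac{4}{7}=\tfrac{6}{7}. \] The point $p=s+2\ell$ gives the same total with the two sides interchanged. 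The inequality is strict because only finitely many blocks occur on the cycle (each assigned to the side from which it is nearer) whereas the completed one-sided series run over infinitely many indices, and because for $t>\ell$ the ceiling in Remark~\ref{computeDist} forces the true distance strictly above $3t$; hence every genuine configuration falls strictly short of $\tfrac67$.

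Parts~(\ref{condition2})--(\ref{condition5}) are the identical estimate with a single block deleted. Writing $M$ for the sum of the closest weights of all blocks other than $I_z$, we have $M<\tfrac67$, and since $D\setminus d_k$ has no vertex in $I_z$ or in $I_k$, \[ w^*(D\setminus d_k,\,p)\ \le\ M-(\text{closest weight of }I_k)\ <\ \tfrac67-(\text{closest weight of }I_k). \] It remains to lower bound the closest weight of $I_k$ uniformly in $\ell$. For $k\equiv z\pm1$ the relevant block is a neighbour of $I_z$, whose closest weight toward the nearer of $s+\ell,s+2\ell$ is $(\tfrac12)^1=\tfrac12$ and toward the farther is $(\tfrac12)^2=\tfrac14$, independently of $\ell$, which yields $\tfrac67-\tfrac14=\tfrac{17}{28}$ and $\tfrac67-\tfrac12=\tfrac{5}{14}$. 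For $k\equiv z\pm2$ the closest weight varies with $\ell$ through the ceiling, its smallest value occurring at $\ell=1$, namely $(\tfrac12)^{6}=\tfrac1{64}$ toward $s+\ell$ and $(\tfrac12)^{5}=\tfrac1{32}$ toward $s+2\ell$; subtracting these minimal values gives $\tfrac67-\tfrac1{64}=\tfrac{377}{448}$ and $\tfrac67-\tfrac1{32}=\tfrac{185}{224}$. The cases $k\equiv z-1$ and $k\equiv z-2$ follow from $k\equiv z+1$ and $k\equiv z+2$ by the reflection of $H$ that fixes $I_z$ and interchanges $s+\ell$ with $s+2\ell$, which is exactly why the two displayed bounds trade places between~(\ref{condition2}) and~(\ref{condition3}) and between~(\ref{condition4}) and~(\ref{condition5}).

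The step I expect to require the most care is making the estimate uniform in $\ell$ while preserving strict inequalities. The closest weight of a block depends on $\ell$ only through the ceiling of Remark~\ref{computeDist}, and for the deleted block $I_{z\pm2}$ this dependence is genuine: its closest weight toward $s+\ell$ is $\tfrac1{32}$ when $\ell\ge2$ but only $\tfrac1{64}$ when $\ell=1$. One must therefore subtract the minimal value, attained at $\ell=1$, and confirm that the larger terms present for $\ell\ge2$ never restore the lost weight. Keeping the wrap-around on the finite cycle consistent with the one-sided geometric majorants, so that no block is double counted and strictness is never lost, is the other place where attention is needed.
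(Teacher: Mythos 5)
Your proposal is correct and follows essentially the same route as the paper: both bound each nonempty interval's contribution by the weight of its vertex position nearest the target point, majorize the two sides by geometric series of ratio $\tfrac18$ (giving $\tfrac27+\tfrac47=\tfrac67$ via Remark~\ref{computeDist}), and then obtain (ii)--(v) by subtracting the deleted interval's closest weight, minimized over $\ell$ (the $\ell=1$ values $\tfrac1{64},\tfrac1{32}$ for $k\equiv z\pm2$). The only cosmetic difference is that the paper realizes your ``closest weights'' concretely by passing to an extremal set $D'$ maximizing $w^*(\cdot,2\ell)$, whereas you work with per-block upper bounds directly.
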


\begin{proof}
Let $\I$ be the partition such that $ \I = \bigcup_{i=0}^{m-1} I_i, $ where $I_i = [ (3\ell +1)i, (3\ell+1) i + 3\ell].$ For the sake of simplicity, let $f(D) = f(D,\I),$ $f^*(D) = f^*(D,\I),$ $z(D) = z(D,\I),$ and  $Z(D) = Z(D,\I).$ Without loss of generality, suppose that $0 \in z(D).$ Then the interval $I_0$ has $f_0(D) = 0.$ Among such $D,$ choose $D'$ to maximize $w^*(D', 2\ell)$ and let $k_0 = \left\lfloor \frac{m}{2} \right\rfloor.$ Then the choice of $D'$ implies that 
\[ I_k \cap D' = \begin{cases} (3\ell +1)k & \text{ for } k \le k_0 \\ (3\ell +1)k + 3\ell & \text{ for } k > k_0. \end{cases}\]
Consider $k \le k_0$ and notice that $\dist_H( 2\ell ,  (3\ell +1)k ) = (3\ell  + 1)k - 2\ell  =  (3k-2)\ell + k.$
By Remark \ref{computeDist},
\begin{equation}\label{DRdist} \dist( 2\ell ,  (3\ell +1)k ) = \left\lceil \frac{ (3k-2)\ell + k}{\ell} \right \rceil = 3k -2 + \left\lceil \frac{k}{\ell} \right\rceil. \end{equation}
Using the fact that $1 \le \left\lceil \frac{k}{\ell} \right\rceil,$ it follows that

\begin{eqnarray}\label{eq1}
\mathlarger\sum_{k = 1}^{k_0} w^*(I_k \cap (D' \setminus P_1'), 2\ell) &=& \mathlarger\sum_{k=1}^{k_0} \left( \frac{1}{2} \right)^{\dist( 2\ell ,  (3\ell +1)k ) -1}  
 < \quad  \mathlarger\sum_{k=1}^{\infty} \left( \dfrac{1}{2} \right)^{\dist( 2\ell ,  (3\ell +1)k ) -1}
 \quad = \quad \mathlarger\sum_{k=1}^{\infty} \left( \dfrac{1}{2} \right)^{3k -3 + \left\lceil \frac{k}{\ell} \right\rceil}   \notag \\ && \\
  &  \le & \mathlarger\sum_{k=1}^{\infty} \left( \dfrac{1}{2} \right)^{3k - 2} 
 \quad\quad\quad\quad=\quad \dfrac{1}{2} \mathlarger\sum_{t=0}^{\infty} \left( \dfrac{1}{2} \right)^{3t} 
  \quad \quad \quad\quad\quad \: \: \: \:= \quad \dfrac{4}{7}. \notag
\end{eqnarray}
Now consider $k> k_0$ and let $k' = m - k.$ Then with respect to $V_H,$
	\begin{eqnarray*} \dist_H( 2\ell ,  (3\ell +1)k+ 3\ell ) &=& \dist_H( (3\ell + 1)m + 2\ell ,  (3\ell +1)k + 3\ell ) \\
	&=& (3\ell + 1)m + 2\ell - (3\ell +1)k - 3\ell \\
	& = &(3k'-1)\ell + k'.
	\end{eqnarray*}	 
Again, applying Remark \ref{computeDist} gives,	
\begin{equation}\label{DLdist} \dist( 2\ell ,  (3\ell +1)k + 3\ell ) = \left\lceil \frac{ (3k'-1)\ell + k'}{\ell} \right \rceil = 3k' -1 + \left\lceil \frac{k'}{\ell} \right\rceil. \end{equation}
Notice since $k$ and $k'$ are counters, (\ref{DRdist}) and (\ref{DLdist}) only differ by $\ell.$ Furthermore, $k$ summing from $k = k_0 +1$ to $m-1$ is the same as summing $k'$ from $1$ to $m-k_0-1,$ which equals $k_0$ or $k_0 -1.$ Therefore we have shown that \[ w^*(D, 2\ell ) \quad \le \quad w^*(D',2\ell) \quad<\quad  \frac{3}{2} \sum_{k = 1}^{k_0} w^*(I_k \cap D', 2\ell)  \quad<\quad \frac{6}{7}.\]Applying a symmetric argument gives that $w^*(D, \ell ) < \frac{6}{7},$ and (\ref{condition1}) has been established. Let $d_k ' := I_k \cap D'$ and observe that $w^*( D \setminus d_k, \ell) \le w^*( D'\setminus d_k' , \ell)$ and $w^*( D \setminus d_k, 2\ell) \le w^*( D'\setminus d_k' , 2\ell).$ Notice that by construction, $d_1' = 3\ell +1$ and $d_2' = 6\ell +2.$ Then applying 
\[ \begin{array}{llllllllll} 	 
\dist_H(d_1', \ell )  &=&  3\ell + 1 - \ell & = & 2\ell + 1, \\
\dist_H(d_1' , 2\ell ) & = & 3\ell +1 - 2\ell &=& \ell + 1, \\
\dist_H(d_2', \ell )  &=&  6\ell + 2 - \ell & = & 5\ell + 2, \\
\dist_H(d_2' , 2\ell ) & = & 6\ell +2 - 2\ell &=& 4\ell + 2, \end{array} \]
with Remark \ref{computeDist}, and the fact that $1 = \left\lceil \frac{1}{\ell} \right\rceil$ and $1 \le \left\lceil \frac{2}{\ell} \right\rceil \le 2$  yields
\[ \begin{array}{lllllllllllll} 	
&& w^*(d_1', \ell ) &=& \left( \frac{1}{2} \right)^{\dist(3\ell +1, \ell) -1 } &=& \left( \frac{1}{2} \right)^{ 1+ \left\lceil \frac{1}{\ell} \right\rceil } &=& \frac{1}{4},\\
&& w^*(d_1', 2\ell ) &=& \left( \frac{1}{2} \right)^{\dist(3\ell +1, 2\ell) -1 } &=& \left( \frac{1}{2} \right)^{ \left\lceil \frac{1}{\ell} \right\rceil } &=& \frac{1}{2} \\
\frac{1}{64} & \le & w^*(d_2', \ell ) &=& \left( \frac{1}{2} \right)^{\dist(6\ell +2, \ell) -1 } &=& \left( \frac{1}{2} \right)^{ 4+ \left\lceil \frac{2}{\ell} \right\rceil } &\le& \frac{1}{32},\\
\frac{1}{32} & \le & w^*(d_2', 2\ell ) &=& \left( \frac{1}{2} \right)^{\dist(6\ell +2, 2\ell) -1 } &=& \left( \frac{1}{2} \right)^{ 3+\left\lceil \frac{2}{\ell} \right\rceil } &\le& \frac{1}{16}.\\
\end{array} \]
Therefore,
\[ \begin{array}{llllllllll} 
w^*( D \setminus d_1 , \ell ) & \le & w^*( D' \setminus d_1' , \ell ) &<& \frac{6}{7} &-& w^*(d_1' , \ell) & = & \frac{17}{28}, \\ & &&&&&&& \\
w^*( D \setminus d_1 , 2\ell ) & \le &  w^*( D' \setminus d_1' , 2\ell ) &<& \frac{6}{7} &-& w^*(d_1' , 2\ell) &=& \frac{5}{14}, \\ & &&&&&&& \\
w^*( D \setminus d_2 , \ell ) & \le & w^*( D' \setminus d_2' , \ell ) &<& \frac{6}{7} &-& w^*(d_2' , \ell) & = & \frac{377}{448}, \\ & &&&&&&& \\
w^*( D \setminus d_2 , 2\ell ) & \le &  w^*( D' \setminus d_2' , 2\ell ) &<& \frac{6}{7} &-& w^*(d_2' , 2\ell) &=& \frac{185}{224}. \\ & &&&&&&& \\
\end{array} \]
Therefore (\ref{condition2}) and (\ref{condition4}) have been verified. Observe that (\ref{condition3}) and (\ref{condition5}) are a symmetric arguments to (\ref{condition2}) and (\ref{condition4}), respectively.
\end{proof}

Given an exponential dominating set $D,$ the following algorithm details the process in how to construct a new exponential dominating set of the same size. With respect to $D,$ this new exponential dominating set has less intervals that contain no exponential dominating vertices, or  has less exponential dominating vertices contained in each interval.   
\begin{algorithm}\label{shiftalg} {\rm
Consider $D,$ an exponential dominating set for $C_{n,[\ell]},$ and the partition $\I$ such that $\I = \bigcup_{i=0}^{m-1} I_i, $ where $I_i = [ (3\ell +1)i, (3\ell+1) i + 3\ell].$ For the sake of simplicity, let $f(D) = f(D,\I),$ $f^*(D) = f^*(D,\I)$ and $Z(D) = Z(D,\I).$ Suppose that $3\le f^*(D).$ Observe that by Remark \ref{f*Zeros}, $Z(D) \ge 2.$ Without loss of generality, assume that for $0<b\le m,$ the intervals $I_0 $ and $I_b$ have that $f_0(D) = f_b(D) = 0.$ Find the interval $I_a $ such that $a = \min\{ 1,2,\ldots, b-1\}$ and $f_a(D) \ge 3.$ Furthermore, assume that the remaining $0<i<b$ have $f_i(D ) \ge 1.$ Without loss of generality suppose that $a \le b-a$ (use a reflection if necessary). Observe that there are at least $f^*(D) + b - 2$ exponential dominating vertices contained in $[0 , (3\ell +1)b + 3\ell].$ We identify the three closest members of $I_a \cap D$ to $(3\ell +1) a + 2\ell,$ and the closest member to $(3\ell +1)i + 2\ell$ within $I_i \cap D$ for $1\le i\neq a \le b,$ to be defined as $P = \{d_0, d_1, \ldots, d_b \}$ for which  
\[ d_i \in \begin{cases}
I_{i+1}\cap D & \text{ if } 0 \le i \le a-2 \\
I_a\cap D & \text{ if } a-1 \le i\le a + 1 \\
I_{i-1 }\cap D & \text{ if } a + 2 \le i \le  b.
\end{cases} \] 
Then define $S = \{s_0, s_1,\ldots, s_b\},$ such that $ s_t = (3\ell +1)t + 2\ell,$ and output the set $D' = (D \setminus P ) \cup S.$ }
 \end{algorithm}
 
 \begin{lemma}\label{shiftalgproof}{ \rm
 Given an exponential dominating set $D \subset V(C_{n,[\ell]} ),$ and the partition $\I$ so that $\I =  \bigcup_{i=0}^{m-1} I_i, $ where $I_i = [ (3\ell +1)i, (3\ell+1) i + 3\ell]$ and $3\le f^*(D,\I) \le 3\ell +1,$ Algorithm \ref{shiftalg} outputs the exponential dominating set $D'$ such that $|D| = |D'|,$ $Z(D',\I) = Z(D,\I) -2,$ and $f^*(D',\I) -2 \le f^*(D',\I) \le f^*(D,\I).$ 
 }
 \end{lemma}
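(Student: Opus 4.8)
The plan is to verify the three numerical claims by bookkeeping and to concentrate the real effort on showing that the output $D'$ is an exponential dominating set. Since I will repeatedly peel off interval counts and weights, I first record that $D \setminus P$ and $S$ are disjoint, which gives the clean splittings $f_i(D',\I) = f_i(D) - |I_i \cap P| + |I_i \cap S|$ and $w^*(D',v) = w^*(D \setminus P, v) + w^*(S,v)$. This disjointness is also the only subtle point in the cardinality claim: by construction $P \subseteq D$ consists of $b+1$ distinct vertices and $|S| = b+1$; each $s_t = (3\ell +1)t + 2\ell$ lies in $I_t$; for $t \in \{0,b\}$ the interval $I_t$ contains no member of $D$, and for $0<t<b$, if $s_t \in D$ then $s_t$ is at distance $0$ from itself and is therefore one of the closest vertices of $I_t \cap D$ that the algorithm removes into $P$. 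Hence $s_t \notin D \setminus P$ in every case, and $|D'| = (|D| - (b+1)) + (b+1) = |D|$.

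Next I would settle the interval counts. The definition of $P$ gives $|I_i \cap P| = 1$ for $i \in \{1,\dots,b-1\}\setminus\{a\}$, $|I_a \cap P| = 3$, and $|I_i \cap S| = 1$ for $0 \le i \le b$, all other contributions being $0$. Feeding this into the splitting yields $f_i(D',\I) = f_i(D) \ge 1$ for $0<i<b$ with $i \ne a$, $f_a(D',\I) = f_a(D) - 2 \ge 1$, $f_0(D',\I) = f_b(D',\I) = 1$, and no change outside $[I_0,I_b]$. Thus exactly $I_0$ and $I_b$ pass from empty to nonempty while no interval becomes empty, so $Z(D',\I) = Z(D,\I) - 2$. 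For the bound on $f^*$, no count exceeds $\max(f_i(D),1) \le f^*(D,\I)$, giving $f^*(D',\I) \le f^*(D,\I)$; and if the maximum of $D$ is attained at an interval other than $I_a$ it is preserved, whereas if it is attained only at $I_a$ then $f_a(D',\I) = f^*(D,\I) - 2$. Either way $f^*(D,\I) - 2 \le f^*(D',\I)$, which is the intended reading of the stated double inequality.

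The heart of the proof is that $D'$ exponentially dominates $C_{n,[\ell]}$. The vertices $s_0,\dots,s_b$ are spaced exactly $3\ell+1$ apart on $H$, so applying Remark \ref{GenintDom} to each consecutive pair $s_t,s_{t+1}$ shows that $S$ by itself exponentially dominates the arc $R := [s_0,s_b] = [2\ell,(3\ell+1)b+2\ell]$; for $v \in R$ we already have $w^*(D',v) \ge w^*(S,v) \ge 1$. For the complementary arc $\bar R$, I would use the disjointness of $S$ and $D \setminus P$ to write $w^*(D',v) - w^*(D,v) = w^*(S,v) - w^*(P,v)$ for every $v$; since $D$ dominates, it then suffices to prove $w^*(S,v) \ge w^*(P,v)$ for all $v \in \bar R$.

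I expect this last comparison to be the main obstacle. The pairing $d_i \mapsto s_i$ pulls the portion of $P$ lying left of $I_a$ leftward into $I_0,\dots,I_{a-1}$, pushes the portion right of $I_a$ rightward into $I_{a+1},\dots,I_b$, and spreads the three vertices taken from $I_a$ onto $I_{a-1},I_a,I_{a+1}$; in particular $S$ reaches both empty end-intervals $I_0$ and $I_b$ while $P$ stays confined to $I_1,\dots,I_{b-1}$. For $v \in \bar R$ the nearest vertex of $S$ is an extreme one, $s_0$ or $s_b$, and it is at least as close to $v$ as any vertex of $P$, so these terms gain weight under the swap; the vertices that move away from $v$ lie on the far side of the block, and their aggregate loss is controlled by a geometric tail of ratio $(\tfrac12)^3$, exactly as in Remark \ref{GenintDom} and the estimate \eqref{eq1}, after converting $\dist_H$ into $\dist$ via Remark \ref{computeDist}. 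Verifying that the gains dominate the losses uniformly over $\bar R$ is the delicate step; once $w^*(S,v) \ge w^*(P,v)$ is established there, we obtain $w^*(D',v) \ge w^*(D,v) \ge 1$, so $D'$ is an exponential dominating set and the proof is complete.
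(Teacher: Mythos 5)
Your bookkeeping is correct and in fact more careful than the paper's own: the disjointness observation, the count $f_a(D',\I)=f_a(D,\I)-2$ (which the paper glosses over by asserting all intervals other than $I_0,I_b$ are unchanged), and the conclusions $Z(D',\I)=Z(D,\I)-2$, $f^*(D,\I)-2\le f^*(D',\I)\le f^*(D,\I)$ all check out. Your reduction of the domination claim is also exactly the paper's strategy: Remark \ref{GenintDom} handles $[s_0,s_b]$, and outside that arc it suffices to show $w^*(S,v)\ge w^*(P,v)$, since then $w^*(D',v)=w^*(D\setminus P,v)+w^*(S,v)\ge w^*(D,v)\ge 1$.

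However, you have not proved that central inequality, and you say so yourself ("Verifying that the gains dominate the losses uniformly over $\bar R$ is the delicate step"). This is the heart of the lemma, not a routine verification, and the heuristic you offer is not sufficient as stated. The natural pairing $d_i\mapsto s_i$ does \emph{not} satisfy $w^*(d_i,v)\le w^*(s_i,v)$ termwise for all $v\in\bar R$: depending on where $v$ sits on the outside arc, roughly half of the vertices of $P$ move \emph{away} from $v$ under the shift, and because this is a sum-versus-sum comparison with equally many terms on each side, "the nearest $S$-vertex beats every $P$-vertex" plus a vague tail bound does not close the argument. The paper resolves this with a four-case analysis keyed to the monotonicity of $w^*(d_i,v)$ in $i$ (equivalently, the cyclic position of $v$), choosing a \emph{different} injection of $P$ into $S$ in each case: shifted vertices absorb two (or, in one case, three) removed vertices, using the quantitative facts that a distance decrease of one step gives $w^*(d,v)+w^*(d',v)\le 2w^*(d,v)\le w^*(s,v)$ and a decrease of two steps (from $\dist_H(d_{a+1},s_{a+1})\ge 2\ell+1$) gives $\sum_{k=a-1}^{a+1}w^*(d_k,v)\le 3w^*(d_{a+1},v)<w^*(s_{a+1},v)$, while the remaining terms are matched to shifted copies that only move toward $v$. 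A tail-type argument could likely be made rigorous (split $P$ according to whether the shortest path from $v$ enters the block past $s_0$ or past $s_b$, use $\left\lceil (x+y)/\ell\right\rceil\ge\left\lceil x/\ell\right\rceil+\left\lceil y/\ell\right\rceil-1$ to get increments of $3$ per interval, and compare against the two endpoint terms $w^*(s_0,v)+w^*(s_b,v)$, with a separate accounting for the tripled interval $I_a$), but none of that is in your proposal; as written, the key step is asserted, not proven.
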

 
 \begin{proof}
 For the sake of simplicity, let $f(D) = f(D,\I),$ $f^*(D) = f^*(D,\I)$ and $Z(D) = Z(D,\I).$  Notice that by construction, $|D| = |D'|$ and $\dist_H(s_i, s_{i+1} ) = 3\ell +1$ for each consecutive pair $s_i, s_{i+1} \in S.$ Through applications of Remark \ref{GenintDom}, all the vertices in $[s_0, s_b]$ are exponentially dominated by vertices of $S$. Let $V' = V(C_{n, [\ell] } ) \setminus [s_0, s_b].$ We define $V_L = \{ v \in V' : \dist_H( v, d_{a+1} ) \le \dist_H(v, d_{a+1} +1 ) \}$ and $V_R = V' \setminus V_L.$ There are four cases:
 
 \begin{enumerate}
 \item \label{vinV_L} Consider the case when $v \in V_L.$  Then $w^*(d_i, v) \ge w^*(d_{i+1} , v)$ for $0 \le i \le b-1.$  By construction, $ w^*(d_{a-2}, v) + w^*(d_{a-1} , v) \le 2w^*(d_{a-2},v) \le w^*(s_{a-2} , v)$ and $w^*(d_{a}, v) + w^*(d_{a+1} , v) \le 2w^*(d_{a},v) \le w^*(s_{a-1} , v).$ Additionally, $ w^*(d_i , v) \le w^*(s_i , v)$ for $0 \le i \le a-3$ and $w^*(d_i , v) \le w^*(s_{i-2} , v),$ for $a+2\le i \le b.$ Figure \ref{vinV_L1} visually shows that $w^*(P\cap D, v) <  w^*(S,v).$ Then putting it together gives that
\[  \begin{array}{lllllllllll}
 w^*(P\cap D, v) &=& \mathlarger\sum_{k=0}^{a-3} w^*(d_k,v) &+&  \mathlarger\sum_{k=a-2}^{a-1} w^*(d_k,v) &+& \mathlarger\sum_{k=a}^{a+1} w^*(d_k,v) &+& \mathlarger\sum_{k = a+2}^b w^*(d_k,v) \\	
 		& <&  \mathlarger\sum_{k=0}^{a-3} w^*(s_k,v) &+& w^*(s_{a-2},v) &+& w^*(s_{a-1},v) &+&\mathlarger \sum_{k = a}^{b-2} w^*(s_k,v) \\
		& <& w^*(S,v).
\end{array} \]
  
\begin{figure}[htp!]
\begin{center} 
\begin{tikzpicture}[scale=1]

\node (a) at (0,3) {$I_0$};
\node (b) at (1,3) {$I_1$};
\node(c) at (2,3) {$I_2$};
\node (d) at (3.5,3) {$I_3$};
\node (e) at (5,3) {$I_4$};
\node (f) at (6,3) {$I_5$};
\node (g) at (7,3) {$I_6$};
\node (h) at (8,3) {$I_7$};

\node (a1) at (0,2) {$\emptyset$};
\node (b1) at (1,2) {$d_0$};
\node(c1) at (2,2) {$d_1$};
\node (d1) at (3.5,2) {$d_2,d_3,d_4$};
\node (e1) at (5,2) {$d_5$};
\node (f1) at (6,2) {$d_6$};
\node (g1) at (7,2) {$d_7$};
\node (h1) at (8,2) {$\emptyset$};

\node (a2) at (0,1) {$s_0$};
\node (b2) at (1,1) {$s_1$};
\node(c2) at (2,1) {$s_2$};
\node (d2) at (3.5,1) {$s_3$};
\node (e2) at (5,1) {$s_4$};
\node (f2) at (6,1) {$s_5$};
\node (g2) at (7,1) {$s_6$};
\node (h2) at (8,1) {$s_7$};

\draw[ -> ] (b1) -- (a2);

\tikzset{
    position label/.style={
       below = 3pt,
       text height = 1.5ex,
       text depth = 1ex
    },
   brace/.style={
     decoration={brace, mirror},
     decorate
   }
}

\node (zz1) at (2.5, 1.5) {};
\node[position label] (z1) at (2 ,2.5) {};
\node[position label] (z2) at (3 ,2.5) {};
\draw [brace] (z1.south) -- (z2.south);

\draw[ -> ] (zz1) -- (b2);

\node (zz2) at (3.5, 1.5) {};
\node[position label] (z3) at (3.5 ,2.5) {};
\node[position label] (z4) at (4 ,2.5) {};
\draw [brace] (z3.south) -- (z4.south);

\draw[ -> ] (zz2) -- (c2);
\draw[ -> ] (e1) -- (d2);
\draw[ -> ] (f1) -- (e2);
\draw[ -> ] (g1) -- (f2);

\end{tikzpicture}
\caption{Illustration of Case \ref{vinV_L} with $a =3, b = 7$} 
\label{vinV_L1}
\end{center}
\end{figure}
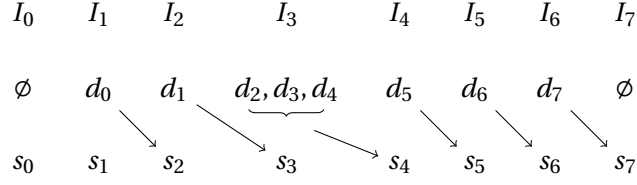 
	
\item \label{case1vinV_R} Consider the case when $v \in V_R$ such that $w^*(d_i, v) \le w^*(d_{i+1} , v)$ for $0 \le i \le b-1.$  Then $\dist_H(d_{a+1}, s_{a+1}) \ge 2\ell +1,$ which implies that $\sum_{k = a-1}^{a+1} w^*(d_k,v) \le 3 w^*(d_{a+1},v) < w^*(s_{a+1},v).$ Also by construction, $w^*(d_i , v) \le  w^*(s_{i+2} , v),$ for $ 0 \le i \le a-2$ and $w^*(d_i , v) \le w^*(s_{i} , v),$ for $a+2\le i \le b.$ Figure \ref{vinV_R2} visually shows that $w^*(P\cap D, v) < w^*(S,v).$ Then it follows that
\[ \begin{array}{llllllllllll}
 w^*(P\cap D, v) &=& \mathlarger\sum_{k=0}^{a-2} w^*(d_k,v) &+&  \mathlarger\sum_{k=a-1}^{a+1} w^*(d_k,v) &+& \mathlarger\sum_{k=a+2}^b w^*(d_k,v)  \\	
 		& <&  \mathlarger\sum_{k=2}^{a} w^*(s_k,v) &+& w^*(s_{a+1},v)  &+& \mathlarger\sum_{k = a+2}^b w^*(s_k,v) \\
		& <& w^*(S,v).
\end{array} \]

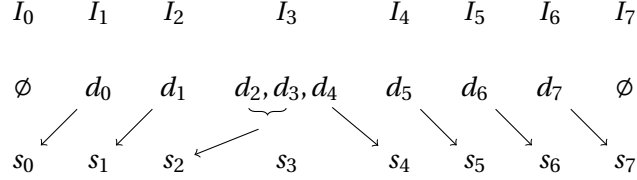
\begin{figure}[htp!]
\begin{center} 
\begin{tikzpicture}[scale=1]

\node (a) at (0,3) {$I_0$};
\node (b) at (1,3) {$I_1$};
\node(c) at (2,3) {$I_2$};
\node (d) at (3.5,3) {$I_3$};
\node (e) at (5,3) {$I_4$};
\node (f) at (6,3) {$I_5$};
\node (g) at (7,3) {$I_6$};
\node (h) at (8,3) {$I_7$};

\node (a1) at (0,2) {$\emptyset$};
\node (b1) at (1,2) {$d_0$};
\node(c1) at (2,2) {$d_1$};
\node (d1) at (3.5,2) {$d_2,d_3,d_4$};
\node (e1) at (5,2) {$d_5$};
\node (f1) at (6,2) {$d_6$};
\node (g1) at (7,2) {$d_7$};
\node (h1) at (8,2) {$\emptyset$};

\node (a2) at (0,1) {$s_0$};
\node (b2) at (1,1) {$s_1$};
\node(c2) at (2,1) {$s_2$};
\node (d2) at (3.5,1) {$s_3$};
\node (e2) at (5,1) {$s_4$};
\node (f2) at (6,1) {$s_5$};
\node (g2) at (7,1) {$s_6$};
\node (h2) at (8,1) {$s_7$};

\draw[ -> ] (b1) -- (c2);
\draw[ -> ] (c1) -- (d2);
\draw[ -> ] (e1) -- (f2);
\draw[ -> ] (f1) -- (g2);
\draw[ -> ] (g1) -- (h2);

\tikzset{
    position label/.style={
       below = 3pt,
       text height = 1.5ex,
       text depth = 1ex
    },
   brace/.style={
     decoration={brace, mirror},
     decorate
   }
}

\node (zz1) at (3.75, 1.5) {};
\node[position label] (z1) at (3 ,2.5) {};
\node[position label] (z2) at (4 ,2.5) {};
\draw [brace] (z1.south) -- (z2.south);

\draw[ -> ] (zz1) -- (e2);

\end{tikzpicture}
\caption{Illustration of Case \ref{case1vinV_R} with $a =3, b = 7.$} 
\label{vinV_R2}
\end{center}
\end{figure} 
\item \label{case2vinV_R} Consider the case when $v \in V_R$ such that $w^*(d_i, v) \ge w^*(d_{i+1} , v)$ for $0 \le i < a$ and $w^*(d_i, v) \le w^*(d_{i+1} , v)$ for $a+1 \le i < b-1.$ By construction, $w^*(d_{a-1} , v) + w^*(d_a , v) \le 2w^*(d_{a-1} , v) \le w^*(s_{a-1} , v).$ Additionally we have that $ w^*(d_i , v) \le w^*(s_i , v),$  for $ 0 \le i \neq a-1, a \le b.$ Figure \ref{vinV_R3a} visually shows that $  w^*(P\cap D, v) < w^*(S,v).$ Putting it together gives
\begin{eqnarray*}
 w^*(P\cap D, v) &=& \sum_{k=0}^{a-2} w^*(d_k,v)+  \sum_{k=a-1}^{a} w^*(d_k,v) + \sum_{k=a+1}^b w^*(d_k,v)  \\	
 		& <&  \sum_{k=0}^{a-2} w^*(s_k,v) + w^*(s_{a-1},v)  + \sum_{k = a+1}^b w^*(s_k,v) \\
		& <& w^*(S,v).
\end{eqnarray*}
 \begin{figure}[htp!]
\begin{center} 
\begin{tikzpicture}[scale=1]

\node (a) at (0,3) {$I_0$};
\node (b) at (1,3) {$I_1$};
\node(c) at (2,3) {$I_2$};
\node (d) at (3.5,3) {$I_3$};
\node (e) at (5,3) {$I_4$};
\node (f) at (6,3) {$I_5$};
\node (g) at (7,3) {$I_6$};
\node (h) at (8,3) {$I_7$};

\node (a1) at (0,2) {$\emptyset$};
\node (b1) at (1,2) {$d_0$};
\node(c1) at (2,2) {$d_1$};
\node (d1) at (3.5,2) {$d_2,d_3,d_4$};
\node (e1) at (5,2) {$d_5$};
\node (f1) at (6,2) {$d_6$};
\node (g1) at (7,2) {$d_7$};
\node (h1) at (8,2) {$\emptyset$};
\node(z) at (3.8 , 2){$\phantom{d_4}$};

\node (a2) at (0,1) {$s_0$};
\node (b2) at (1,1) {$s_1$};
\node(c2) at (2,1) {$s_2$};
\node (d2) at (3.5,1) {$s_3$};
\node (e2) at (5,1) {$s_4$};
\node (f2) at (6,1) {$s_5$};
\node (g2) at (7,1) {$s_6$};
\node (h2) at (8,1) {$s_7$};

\draw[ -> ] (b1) -- (a2);
\draw[ -> ] (c1) -- (b2);
\draw[ -> ] (e1) -- (f2);
\draw[ -> ] (f1) -- (g2);
\draw[ -> ] (g1) -- (h2);

\tikzset{
    position label/.style={
       below = 3pt,
       text height = 1.5ex,
       text depth = 1ex
    },
   brace/.style={
     decoration={brace, mirror},
     decorate
   }
}
\node (zz1) at (3.25, 1.5) {};
\node[position label] (z1) at (3 ,2.5) {};
\node[position label] (z2) at (3.5 ,2.5) {};
\draw [brace] (z1.south) -- (z2.south);

\draw[ -> ] (zz1) -- (c2);
\draw[ -> ] (z) -- (e2);
\end{tikzpicture}
\caption{Illustration of Case \ref{case2vinV_R} with $a =3, b = 7.$} 
\label{vinV_R3a}
\end{center}
\end{figure}
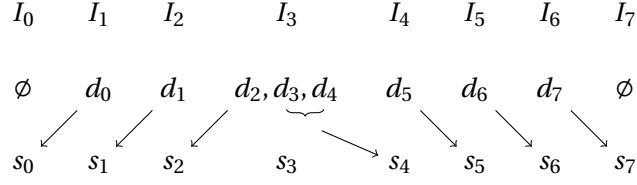 

\item \label{case3vinV_R} Consider the case when $v \in V_R$ such that  $w^*(d_i, v) \ge w^*(d_{i+1} , v)$ for $0 \le i < a-1$ and $w^*(d_i, v) \le w^*(d_{i+1} , v)$ for $a \le i < b-1.$ By construction, $w^*(d_a , v) + w^*(d_{a+1} , v) \le 2w^*(d_{a+1} , v) \le w^*(s_{a+1} , v). $ Additionally we have that $ w^*(d_i , v) \le w^*(s_i , v),$  for $ 0 \le i \neq a, a+1 \le b.$ Figure \ref{vinV_R3} visually shows that $w^*(P\cap D, v) <  w^*(S,v).$ Putting it together gives
\[\begin{array}{lllllllll}
 w^*(P\cap D, v) &=& \mathlarger\sum_{k=0}^{a-1} w^*(d_k,v) &+&  \mathlarger\sum_{k=a}^{a+1} w^*(d_k,v) &+& \mathlarger\sum_{k=a+2}^b w^*(d_k,v)  \\	
 		& <&  \sum_{k=0}^{a-1} w^*(s_k,v) &+& w^*(s_{a+1},v)  &+& \mathlarger\sum_{k = a+2}^b w^*(s_k,v) \\
		& <& w^*(S,v).
\end{array} \]

 \begin{figure}[htp!]
\begin{center} 
\begin{tikzpicture}[scale=1]

\node (a) at (0,3) {$I_0$};
\node (b) at (1,3) {$I_1$};
\node(c) at (2,3) {$I_2$};
\node (d) at (3.5,3) {$I_3$};
\node (e) at (5,3) {$I_4$};
\node (f) at (6,3) {$I_5$};
\node (g) at (7,3) {$I_6$};
\node (h) at (8,3) {$I_7$};

\node (a1) at (0,2) {$\emptyset$};
\node (b1) at (1,2) {$d_0$};
\node(c1) at (2,2) {$d_1$};
\node (d1) at (3.5,2) {$d_2,d_3,d_4$};
\node (e1) at (5,2) {$d_5$};
\node (f1) at (6,2) {$d_6$};
\node (g1) at (7,2) {$d_7$};
\node (h1) at (8,2) {$\emptyset$};
\node(z) at (3 , 2){$\phantom{d_2}$};

\node (a2) at (0,1) {$s_0$};
\node (b2) at (1,1) {$s_1$};
\node(c2) at (2,1) {$s_2$};
\node (d2) at (3.5,1) {$s_3$};
\node (e2) at (5,1) {$s_4$};
\node (f2) at (6,1) {$s_5$};
\node (g2) at (7,1) {$s_6$};
\node (h2) at (8,1) {$s_7$};

\draw[ -> ] (b1) -- (a2);
\draw[ -> ] (c1) -- (b2);
\draw[ -> ] (e1) -- (f2);
\draw[ -> ] (f1) -- (g2);
\draw[ -> ] (g1) -- (h2);

\tikzset{
    position label/.style={
       below = 3pt,
       text height = 1.5ex,
       text depth = 1ex
    },
   brace/.style={
     decoration={brace, mirror},
     decorate
   }
}
\node (zz1) at (3.85, 1.5) {};
\node[position label] (z1) at (3.5,2.5) {};
\node[position label] (z2) at (4 ,2.5) {};
\draw [brace] (z1.south) -- (z2.south);
\draw[ -> ] (zz1) -- (e2);
\draw[ -> ] (z) -- (c2);
\end{tikzpicture}
\caption{Illustration of Case \ref{case3vinV_R} with $a =3, b = 7.$} 
\label{vinV_R3}
\end{center}
\end{figure}
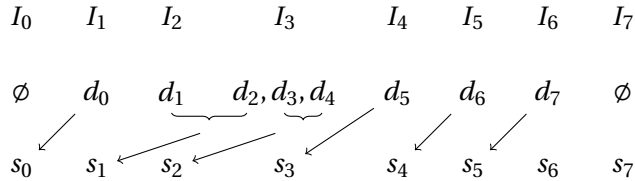 
 \end{enumerate}
 In each instance, we have shown that $|D| = |D'|,$ $w^*(D', i ) \ge 1$ for every $i \in [s_0, s_b],$ and  $w^*(D, v ) \le w^*(D',v)$ for every $v \in V'.$ Therefore $D'$ is an exponential dominating set. By construction, it follows that $I_0$ and $I_b$ have that $f_0(D') = f_0(D) +1$ and $f_b(D' ) = f_b(D) +1.$ Furthermore, all remaining $I_i$ where  $i \neq 0,b$ have that $f_i(D) = f_i(D').$ Therefore $Z(D') = Z(D) -2$ and $f^*(D') -2 \le f^*(D') \le f^*(D).$ 
\end{proof}

The following lemma shows that if $D\subset V(C_{n,[\ell]})$ has the property that one interval contains three members of $D,$ two intervals that contain no members of $D$, and all remaining intervals have one member of $D,$ then $D$ cannot be an exponential dominating set.

 \begin{lemma}\label{f*=3notDom}
{ \rm Consider $D\subset V(C_{n,[\ell]})$ and the partition $\I$ such that $ \I =  \bigcup_{i=0}^{m-1} I_i, $ where $I_i = [ (3\ell +1)i, (3\ell+1) i + 3\ell]$ and $f^*(D,\I) = 3.$ Assume that  $I_i ,I_j \subset \I$ are cyclically consecutive intervals for which $f_i(D,\ell) = f_j(D,\ell) = 0.$ Furthermore suppose that there exist $I_k \subset \I$ for which $i<k<j$ and $f_k(D,\ell) = 3,$ and all remaining intervals $I_t\subset \I$ have $f_t(D,\ell) = 1.$ Then $D$ cannot be an exponential dominating set. }
\end{lemma}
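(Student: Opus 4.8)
The plan is to exhibit a single vertex whose assigned weight is strictly less than $1$, which shows that $D$ cannot dominate $C_{n,[\ell]}$. First I would pin down the global structure. Since $f^*(D,\I)=3$, Remark \ref{f*Zeros} gives $\sum_t f_t(D,\I)\le m$, and combined with the hypotheses (one interval $I_k$ with three members, the two empty intervals $I_i,I_j$, and every other interval a singleton) this forces $\sum_t f_t(D,\I)=m$ exactly. Hence the configuration is determined up to the cyclic placement of the three vertices inside $I_k$ and the single vertex inside each singleton interval. In particular $I_k$ lies on one arc between the two empty intervals, while on the complementary arc every interval is a singleton.

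The target vertices are the two centers $(3\ell+1)z+\ell$ and $(3\ell+1)z+2\ell$ of each empty interval $z\in\{i,j\}$, which are the hardest to dominate, exactly as in Lemma \ref{newBC2}. For such a center $v$ I would split the weight as
\[
w^*(D,v)=w^*\big(D\setminus(I_k\cap D),\,v\big)+w^*\big(I_k\cap D,\,v\big),
\]
and bound the first summand by the appropriate case of Lemma \ref{newBC2}: part (\ref{condition1}) when $I_k$ is far from $I_z$, and parts (\ref{condition2})--(\ref{condition5}) according to whether $I_k$ plays the role of the interval $z\pm1$ or $z\pm2$. These bounds are legitimate because, after deleting the three members of $I_k$, the remainder is an $f^*\le1$ configuration with the empty interval at $z$, so the maxima computed in Lemma \ref{newBC2} apply. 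Crucially, since the second empty interval also contributes nothing, each bound may be sharpened by subtracting the (small) weight that its now-missing singleton would otherwise have sent to $v$; this correction is what keeps the estimates below the threshold.

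The heart of the argument is the second summand $w^*(I_k\cap D,v)$, the contribution of the three members of $I_k$. The key structural observation is that a position of $I_k$ close to $I_i$ is necessarily far from $I_j$: writing $u_1$ and $u_2$ for the centers of $I_i$ and $I_j$ on the sides away from $I_k$, the two Hamiltonian distances $\dist_H(d,u_1)$ and $\dist_H(d,u_2)$ sum to a constant as $d$ ranges over $I_k$. Applying Remark \ref{computeDist}, the quantity $w^*(d,u_1)+w^*(d,u_2)$ is therefore controlled, and no choice of three distinct positions in $I_k$ can simultaneously push the accumulated triple weight at $u_1$ and at $u_2$ above the leftover thresholds (for instance $1-\tfrac{17}{28}=\tfrac{11}{28}$) produced in the previous paragraph. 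Consequently at least one of $u_1,u_2$ receives total weight strictly below $1$, contradicting that $D$ dominates it.

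I expect the main obstacle to be the tightness of the numerics. The leftover thresholds and the maximal triple contributions are very close, so the argument only closes once the correction for the missing second empty interval is folded in and the three-vertex placement inside $I_k$ is optimized exactly against the two competing centers. Because $\dist=\lceil \dist_H/\ell\rceil$ rounds differently for small and large $\ell$, the decisive inequality must be checked uniformly in $\ell\ge1$, most likely by isolating the boundary values (such as $\ell=1$ and $\ell=2$) where the ceilings are least favorable; this verification is the delicate computational core of the proof.
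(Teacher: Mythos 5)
Your strategy mirrors the paper's own: split off the contribution of $I_k\cap D$, bound what remains via Lemma \ref{newBC2}, and force the contradiction at the far centers of the two empty intervals. The reduction this supports is sound: if $I_k$ is at cyclic distance at least two from one of the empty intervals, then each of its three vertices sends at most $\tfrac{1}{32}$ to that interval's far center, so the triple sends at most $\tfrac{3}{32}<\tfrac{1}{7}$, while the rest of $D$ sends less than $\tfrac{6}{7}$ there by part (\ref{condition1}) of Lemma \ref{newBC2}; that center is then not dominated. But your ``key structural observation'' --- that no three positions in $I_k$ can push the weight at both far centers above the leftover thresholds --- is false in the only case that survives this reduction, namely $k\equiv i+1$ and $j\equiv i+2 \pmod m$, where $I_k$ is adjacent to both empty intervals. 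That case is the entire substance of the lemma.

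Concretely, take $i=0$, $k=1$, $j=2$ and $\ell\ge 2$, so the far centers are $u_1=\ell$ and $u_2=8\ell+2$. The triple $\{3\ell+2,\,4\ell+2,\,5\ell+2\}\subset I_1$ sends $\tfrac14+\tfrac18+\tfrac{1}{16}=\tfrac{7}{16}$ to $u_1$ and, symmetrically, $\tfrac{7}{16}$ to $u_2$. This beats your threshold $\tfrac{11}{28}=\tfrac{176}{448}$, beats the sharpened threshold $\tfrac{11}{28}+\tfrac{1}{64}=\tfrac{183}{448}$ (note $\tfrac{7}{16}=\tfrac{196}{448}$), and beats the best threshold this scheme can ever produce: the singletons in $I_3,\dots,I_{m-1}$ can legitimately contribute up to $\tfrac12+\tfrac{1}{16}+\tfrac{1}{128}+\cdots<\tfrac47+\tfrac{1}{224}$ at $u_1$ and simultaneously at $u_2$ (different intervals serve the two centers), so the method can only require the triple to exceed $1-\tfrac47-\tfrac{1}{224}=\tfrac{95}{224}$, and $\tfrac{7}{16}=\tfrac{98}{224}$ does exceed it. Nor is this a matter of numerical tightness that more careful optimization would fix: for $\ell=3$, $n=100$, the set $D=\{11,14,17,30,40,50,60,79,89,99\}$ satisfies every hypothesis of the lemma, yet gives weight roughly $1.012$ to both $u_1=3$ and $u_2=26$, and more than $1$ to the near centers $6$ and $23$ as well; it fails to dominate only at vertices such as $8\ell=24$. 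So the missing idea is real: no argument confined to the centers of the empty intervals can prove this lemma, and the contradiction must be located at a vertex like $8\ell$, or found by a more global accounting. (Be aware that the same configuration also conflicts with the extremality claim for $\{3\ell+1,5\ell,6\ell+1\}$ in the paper's own treatment of this case --- that triple delivers only $\tfrac{13}{32}$ per center, which incidentally already exceeds $\tfrac{11}{28}$ --- so the doubly adjacent case is delicate even in the published proof, and your sketch does not yet contain an argument that closes it.)
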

\begin{proof} For the sake of simplicity, let $f^*(D) = f^*(D,\I)$ and $Z(D) = Z(D,\I).$ Without loss of generality, assume that for $0<a<b < m,$ the intervals $I_0$ $I_a,$ and $I_b$ have $f_0(D) = f_b(D) = 0,$ $f_a(D) = 3.$ Furthermore, assume that the remaining intervals $I_t$ have that $f_t(D) = 1.$ Let $I_a \cap D = \{ d_1 , d_2, d_3\}$ and without loss of generality, consider $d_1, d_2.$ Notice that there is exactly one member of $D \setminus \{d_1, d_2\}$ in every nonempty interval, so $f^*(D \setminus \{d_1, d_2\}) = 1$ and $Z(D \setminus \{d_1, d_2\}) \ge 1.$ By (\ref{condition1}) of Lemma \ref{newBC2}, it follows that $w^*(D \setminus \{d_1, d_2\} ,  \ell ), w^*(D \setminus\{d_1, d_2\} , (3\ell +1)b + 2\ell ) < \frac{6}{7}.$ This implies that $w^*(\{d_1, d_2\} ,  \ell ),$ $w^*(\{d_1, d_2\} , (3\ell +1)b + 2\ell ) > \frac{1}{7}.$ The only case in which both conditions hold is when $a = 1$ and $b = 2.$ Among such $D,$ we choose $D'$ to maximize $w^*(D',\ell) + w^*(D', (3\ell+1)b + 2\ell).$ 
 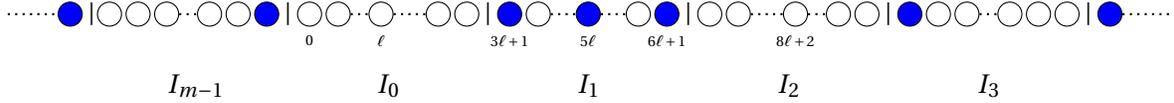
\begin{figure}[!htp]
\centering
\begin{tikzpicture}[scale = .95]
\node (z) at (-0.25, 0) {$|$};
\node (aa1) at (1.2,-1) {$I_{m-1}$};
\node[circle, fill = white, draw] (a) at (0,0) {};
\node[circle, fill = white, draw] (b) at (0.40,0) {};
\node[circle, fill = white, draw] (c) at (.8,0) {};
\node[circle, fill = white, draw] (e) at (1.4,0) {};
\node[circle, fill = white, draw] (f) at (1.8,0) {};
\node[circle, fill = blue, draw] (g) at (2.2,0) {};
\node (z1) at (2.5, 0) {$|$};
\draw[ thick, dotted ] (c) -- (e);

\node (aa1) at (3.9,-1) {$I_0$};
\node[circle, fill = white, draw, label = below:\tiny$0$] (a1) at (2.8,0) {};
\node[circle, fill = white, draw] (b1) at (3.2,0) {};
\node[circle, fill = white, draw, label = below:\tiny$\ell$] (c1) at (3.8,0) {};
\node[circle, fill = white, draw] (e1) at (4.6,0) {};
\node[circle, fill = white, draw] (g1) at (5,0) {};
\node (z3) at (5.3, 0) {$|$};
\draw[ thick, dotted ] (c1) -- (e1);

\draw[ thick, dotted ] (b1) -- (c1);
\node (aa1) at (6.7,-1) {$I_1$};
\node[circle, fill = blue, draw, label = below:\tiny$3\ell +1$] (a2) at (5.6,0) {};
\node[circle, fill = white, draw] (b2) at (6,0) {};
\node[circle, fill = blue, draw, label = below:\tiny$5\ell$] (c2) at (6.7,0) {};
\node[circle, fill = white, draw] (f2) at (7.4,0) {};
\node[circle, fill = blue, draw,  label = below:\tiny$6\ell +1$] (g2) at (7.8,0) {};
\node (z5) at (8.1, 0) {$|$};
\draw[ thick, dotted ] (b2) -- (c2) -- (f2);

\node (aa1) at (9.5,-1) {$I_2$};
\node[circle, fill = white, draw] (a3) at (8.4,0) {};
\node[circle, fill = white, draw] (b3) at (8.8,0) {};
\node[circle, fill = white, draw, label = below:\tiny$8\ell +2$] (c3) at (9.6,0) {};
\node[circle, fill = white, draw] (f3) at (10.2,0) {};
\node[circle, fill = white, draw] (g3) at (10.6,0) {};
\node (z7) at (10.9, 0) {$|$};
\draw[ thick, dotted ] (b3) -- (c3) -- (f3);

\node (aa1) at (12.3,-1) {$I_3$};
\node[circle, fill = blue, draw] (a4) at (11.2,0) {};
\node[circle, fill = white, draw] (b4) at (11.6,0) {};
\node[circle, fill = white, draw] (c4) at (12,0) {};
\node[circle, fill = white, draw] (d4) at (12.6,0) {};
\node[circle, fill = white, draw] (f4) at (13,0) {};
\node[circle, fill = white, draw] (g4) at (13.4,0) {};
\node (z8) at (13.7, 0) {$|$};
\draw[ thick, dotted ] (c4) -- (d4);

\node[circle, fill = blue, draw] (end1) at (14,0) {};

\node (start) at (-1.55,0){};
\node[circle, fill = blue, draw] (start1) at (-.55,0) {};

\node (end) at (15,0){};

\draw[ thick, dotted ] (start) -- (start1);
\draw[ thick, dotted ] (end1) -- (end);
\end{tikzpicture}
 \caption{Visualization of $D'$, with edges removed and members of $D'$  colored}
 \label{visualf*D=3,Z=2}
  \end{figure} 
See Figure \ref{visualf*D=3,Z=2} for an illustration of $D'.$ Let $k_0 =  \left\lfloor  \frac{m}{2} \right\rfloor +1$ and consider the case when $m$ is odd. Then, \[ I_k \cap D' = \begin{cases} \{ 3\ell + 1, 5\ell, 6\ell +1 \} & \text{ if } k = 1 \\ (3\ell + 1)k & \text{ if } 2< k \le k_0 \\ (3\ell +1)k + 3\ell  & \text{ if }  k_0< k \le m. \end{cases} \]
If $m$ is even, \[ I_k \cap D' = \begin{cases} \{ 3\ell + 1, 5\ell, 6\ell +1 \} & \text{ if } k = 1 \\ (3\ell + 1)k & \text{ if } 2< k < k_0 \\ (3\ell +1)k + 2\ell-1 & \text{ if } k = k_0 \\ (3\ell +1)k + 3\ell  & \text{ if } k_0 < k \le m. \end{cases} \]
We now compute the length of the shortest path from $\ell$ to $5\ell, 6\ell +1$ and from $8\ell +2$ to $ 3\ell +1, 5\ell.$  Then notice that with respect to $V_H,$ $\dist_H( 5\ell , \ell ) = 4\ell,$ $ \dist_H( 6\ell +1, \ell ) = 5\ell +1,$ $\dist_H( 3\ell +1 , 8\ell +2 ) = 5\ell +1,$ and $\dist_H(  5\ell  , 8\ell +2  )  = 3\ell +2.$ Using Remark \ref{computeDist} and the fact that $1 = \left\lceil \frac{1}{\ell} \right\rceil$ and $1\le  \left\lceil \frac{2}{\ell} \right\rceil \le 2,$ it follows that
\[ \begin{array}{llllllllllll}
 w^*( 5\ell, \ell ) &=& \left( \frac{1}{2} \right)^{\dist( 5\ell ,  \ell ) -1} &=& \left( \frac{1}{2} \right)^3 &=& \frac{1}{8} \\
 w^*( 6\ell +1, \ell ) &=& \left( \frac{1}{2} \right)^{\dist( 6\ell +1 ,  \ell ) -1} &= &\left( \frac{1}{2} \right)^{4+ \left\lceil \frac{1}{\ell} \right\rceil } &=&  \frac{1}{32}  \\
 w^*( 3\ell +1, 8\ell +2) &=& \left( \frac{1}{2} \right)^{\dist( 3\ell +1, 8\ell +2) -1} &=& \left( \frac{1}{2} \right)^{4+ \left\lceil \frac{1}{\ell} \right\rceil } &=&  \frac{1}{32}  \\
 w^*( 5\ell , 8\ell +2 ) &=& \left( \frac{1}{2} \right)^{\dist( 5\ell , 8\ell +2) -1} &=& \left( \frac{1}{2} \right)^{2+ \left\lceil \frac{2}{\ell} \right\rceil } &\le&  \frac{1}{8},
 \end{array} \]
 Therefore $ w^*( \{5\ell, 6\ell+ 1 \} , \ell ), w^*( \{3\ell +1, 5\ell \} , 8\ell +2 ) \le \frac{5}{32}.$ Regardless of the parity of $m,$ there is exactly one member of $D' \setminus\{3\ell +1, 5\ell \}$ and one member of $D' \setminus\{5\ell, 6\ell+ 1\}$ in every nonempty interval. Therefore $f^*(D' \setminus\{3\ell +1, 5\ell \}) = f^*(D' \setminus\{5\ell, 6\ell+ 1\}) = 1$ and $Z(D' \setminus\{3\ell +1, 5\ell \}),$ $Z(D' \setminus\{5\ell, 6\ell+ 1\}) \ge 0.$ Then by (\ref{condition4}) and (\ref{condition5}) of Lemma \ref{newBC2}, it follows that $ w^*(D' \setminus \{5\ell, 6\ell+ 1 \} , \ell ) ,w^*(D' \setminus \{3\ell +1, 5\ell \} , 8\ell +2 ) <  \frac{377}{448}.$ Therefore
 \[ \begin{array}{llllllll}
  w^*(D , \ell) &\le& w^*(D' \setminus \{5\ell, 6\ell+ 1 \}, \ell) &+& w^*( \{5\ell, 6\ell+ 1 \}, \ell) &=& \frac{447}{448}, \\ &&&&&& \\
  w^*(D, 8\ell +2 ) &\le&  w^*(D' \setminus \{3\ell +1, 5\ell \} , 8\ell +2 ) &+& w^*( \{3\ell +1, 5\ell \} , 8\ell +2 ) &=& \frac{447}{448}.
  \end{array}\]
Putting it together gives that $w^*(D , \ell) +   w^*(D, 8\ell +2 ) =\frac{447}{224} < 2.$ As it is not possible for $\ell$ and $8\ell +2$ to receive sufficient weight from $D',$ it follows that $D$ cannot be an exponential dominating set.
\end{proof}

Consider the situation when there are at most two members of $D \subset V(C_{n,[\ell]})$ in each interval. The following lemma shows that for every interval that contains no members of $D,$ there must be an adjacent interval that contains two members of $D.$

\begin{lemma}\label{f_k=Z}
{ \rm Let $D$ be a minimum exponential dominating set for $C_{n,[\ell]}$ and $\I$ be a partition such that $\I =  \bigcup_{i=0}^{m-1} I_i, $ where $I_i = [ (3\ell +1)i, (3\ell+1) i + 3\ell]$ and $f^*(D,\I) = 2.$ Then for every $0 \le j <m$ for which $f_j(D,\I) = 0,$ there exist $k \equiv j \pm 1 \mod m$ such that $f_k(D,\I) = 2.$ Moreover,  $| \{ k : f_k(D,\I) = 2 \} | = Z(D,\I).$  }
\end{lemma}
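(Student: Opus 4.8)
The plan is to prove the adjacency statement and the counting identity separately, using throughout that a minimum set obeys $|D|\le m$ (Remark \ref{DminSet}). Writing $n_0=Z(D,\I)$, $n_1=|\{k:f_k(D,\I)=1\}|$ and $n_2=|\{k:f_k(D,\I)=2\}|$, the relations $n_0+n_1+n_2=m$ and $n_1+2n_2=|D|\le m$ give $n_2\le n_0$ for free. So the real content is the adjacency claim together with the reverse inequality $n_0\le n_2$.

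For the adjacency claim, fix $j$ with $f_j(D,\I)=0$ and, after rotating, take $j=0$. I would show that if \emph{both} neighbors satisfy $f_{m-1}(D,\I),f_1(D,\I)\le 1$, then the vertex $2\ell$ is not dominated. Splitting $w^*(D,2\ell)$ by interval and reading off distances through Remark \ref{computeDist}, the neighbor $I_1$ contributes at most $w^*(3\ell+1,2\ell)=\tfrac12$, the neighbor $I_{m-1}$ at most $\tfrac14$, and the intervals at distance $\ge 2$ on the right and left contribute at most $2\sum_{t\ge2}(\tfrac12)^{3t-2}=\tfrac17$ and $2\sum_{t\ge2}(\tfrac12)^{3t-1}=\tfrac1{14}$ respectively, each interval carrying at most two members. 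Summing yields $w^*(D,2\ell)\le \tfrac12+\tfrac14+\tfrac17+\tfrac1{14}=\tfrac{27}{28}<1$, contradicting that $D$ dominates. Since $f^*(D,\I)=2$, at least one neighbor therefore has exactly two members, which is the claim.

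For $n_0\le n_2$ I would build the bipartite graph $H$ whose parts are the empty and the full intervals, joining an empty interval to each full cyclic neighbor; by the adjacency claim every empty has degree $\ge 1$ in $H$, and $n_0\le n_2$ follows from Hall's theorem once no component of $H$ has more empties than fulls. Two facts are needed. First, no two empty intervals are cyclically adjacent: if $I_0,I_1$ were both empty, the adjacency claim forces $I_{m-1},I_2$ to be full, and a bound exactly like the one above gives $w^*(D,4\ell+1)\le \tfrac12+\tfrac18+\tfrac5{56}=\tfrac57<1$. Hence (as $H$ is bipartite) the components are alternating paths and cycles, and a deficient component must be a path $O,E,F,E,\dots,F,E,O$ of $p$ empties and $p-1$ fulls, flanked by intervals with exactly one member (not empty by the first fact, not full by maximality). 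I would exclude this by a \emph{global} weight estimate over all $2p$ vertices lying at offsets $\ell$ and $2\ell$ inside the $p$ empties, which collectively need total weight $\ge 2p$. An offset‑by‑offset computation (again via Remark \ref{computeDist}) shows a single member inside a full interval spreads at most $\tfrac{15}{16}$ to its four nearest centers and at most $\tfrac1{64}$ to all others, hence at most $\tfrac{61}{64}<1$; a member in a flanking one-interval spreads at most $\tfrac34$; and members outside the stretch contribute a bounded amount, reaching only the two boundary centers appreciably. Adding these over the $2(p-1)$ full-members, the two end-members, and the outside gives total center weight at most $\tfrac{61}{32}(p-1)+2$, and $2p-\bigl(\tfrac{61}{32}(p-1)+2\bigr)=\tfrac{3}{32}(p-1)>0$ for every $p\ge2$. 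This contradicts domination, so no deficient component exists, Hall's condition holds, and $n_0\le n_2=n_0$.

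The main obstacle is this final global estimate. A naive localized version, comparing the weight at one empty interval's two centers against the single full neighbor it shares with an adjacent empty, is only marginally short of a contradiction and in fact fails for longer alternating runs, because the full intervals are \emph{coupled}: each must serve the empties on both of its sides at once. Summing over all $2p$ centers of the run simultaneously is precisely what decouples these competing demands, and the crux is the clean per-member bound strictly below $1$ (the $\tfrac{61}{64}$ for full-members), which is what generates the linear surplus $\tfrac{3}{32}(p-1)$. Making the offset bookkeeping rigorous, and verifying that the contribution from intervals outside the stretch is bounded independently of $p$, is the delicate step; the sharp fractions appearing in Lemma \ref{newBC2} are exactly the flavor of estimate this requires.
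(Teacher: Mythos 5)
Your treatment of the adjacency claim is essentially the paper's own argument: both fix an empty interval, assume each cyclic neighbor holds at most one member of $D$, and show the vertex $2\ell$ receives weight strictly less than $\tfrac{27}{28}$. The paper reaches this constant by splitting $D$ into a one-member-per-interval part, bounded by $\tfrac{6}{7}$ via Lemma~\ref{newBC2}(\ref{condition1}) applied to an extremal configuration, plus the surplus members, bounded by $\tfrac{3}{28}$; you sum the same geometric series directly, and the constants agree exactly. Where you genuinely diverge is the ``moreover'' statement. The paper handles it in two sentences: $|\{k : f_k(D,\I)=2\}|\le Z(D,\I)$ by cardinality (your $n_2\le n_0$), and then the assertion that no empty interval can have full intervals on both of its sides, from which equality is concluded. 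Note that this assertion alone does not yield $Z(D,\I)\le|\{k:f_k(D,\I)=2\}|$: even if every empty interval has exactly one adjacent full interval, the assignment of empties to fulls can fail to be injective, precisely in your configuration $O\,E\,F\,E\,O$, where two empties share a single full. Your bipartite-component analysis and the global weight estimate over the $2p$ centers of a deficient run are exactly what rule this out (the case $p=2$), so the extra machinery you introduce does real work at the one point where the paper's argument is tersest; your intermediate fact that two empty intervals cannot be cyclically adjacent (the $\tfrac{5}{7}$ computation) is also correct and has no counterpart in the paper.

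That said, the constants in your final estimate are asserted rather than derived, and two of them are off, though harmlessly. A member sitting at an end of a full interval contributes at most $\tfrac12+\tfrac14+\tfrac1{16}+\tfrac1{32}=\tfrac{27}{32}$ to its four nearest centers, so your $\tfrac{15}{16}$ is safe there; but its total contribution to all remaining centers of the run is of order $\tfrac{3}{128}$, which exceeds your stated $\tfrac{1}{64}$, and a flanking member contributes slightly more than $\tfrac34$ once centers beyond its nearest empty are counted. Redoing the bookkeeping with corrected values still leaves every full member strictly below total weight $1$ and the flanking-plus-outside contribution strictly below $2$, so a positive (if smaller) linear surplus survives and your contradiction stands; but these verifications must actually be carried out, offset by offset via Remark~\ref{computeDist}, for the proof to be complete.
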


\begin{proof}
For the sake of simplicity, let $f(D) = f(D,\I),$ $f^*(D) = f^*(D,\I),$ $z(D) = z(D,\I),$ and  $Z(D) = Z(D,\I).$ Let $K = \{k : f_k(D) = 2\}.$ As Remark \ref{DminSet} shows that $|D| \le m,$ it follows that for every $k \in K$ there must exist a distinct $z \in z(D).$ Therefore we have that $|K| \le Z(D).$ Let $I_k \cap D=\{ d_k , d_k' \}$ for every $k \in K$ and let  $P = P_1 \cup P_2$ such that $P_1 = \{ d'_k : k \in K\}$ and $P_2 = \{ d_k : k \in K\}.$ Without loss of generality suppose that $0 \in z(D).$ Then the interval $I_0$ has that $f_0(D) =0.$ Let $\hat{D}$ be an exponential dominating set such that $f_0(\hat{D}) = 0,$ $f_1(\hat{D}) = f_{m-1}(\hat{D}) = 1,$ and $f_i(\hat{D}) = 2$ for $2\le i \le m-2.$ Among such $\hat{D},$ choose $D'$ to maximize $w^*(D' , 2\ell).$ Let $I_k \cap D'=\{ s_k , s_k' \}$ such that $w^*(s_k , 2\ell) \le w^*(s_k', 2\ell)$ and without loss of generality, let $P_1' = \{ s'_k : 2\le k \le m-2\}.$ By construction, it follows that $ w^*(D\setminus P_1,2\ell) \le w^*(D' \setminus P_1' , 2\ell).$ Notice that there is exactly one member of $D' \setminus P_1'$ in every nonempty interval, so $f^*(D' \setminus P_1') = 1$ and $Z(D' \setminus P_1') \ge 1.$ Therefore by Lemma (\ref{condition1}) of \ref{newBC2}, $w^*( D' \setminus P_1' ,2\ell) <\frac{6}{7}.$ Putting it together gives $w^*( D \setminus P_1,2\ell) <\frac{6}{7}.$ Let  $k_0 = \left\lfloor \frac{m}{2} \right\rfloor,$ then the choice of $D'$ implies that   
 \[I_k \cap P_1'= \begin{cases} (3\ell +1)k & \text{ if  } 2 \le k \le k_0 \\
  (3\ell +1)k + 3\ell  & \text{ if  }  k_0 < k \le m-2.
  \end{cases} \]
Consider $2\le k \le k_0.$ Then using (\ref{DRdist}) and  (\ref{eq1}), it follows that 
\begin{eqnarray}\label{eq2}
 \sum_{k = 2}^{k_0} w^*(I_k \cap P_1', 2\ell)  <\quad   \sum_{k=1}^{\infty} \left( \dfrac{1}{2} \right)^{\dist( 2\ell ,  (3\ell +1)k ) -1} - \quad  \left( \dfrac{1}{2} \right)^{\dist( 2\ell ,  (3\ell +1)) -1}  \le  \quad\dfrac{4}{7} \quad-\quad \dfrac{1}{2} &=& \dfrac{1}{14}. 
 \end{eqnarray}
Now consider $k_0 < k \le m-2$ and let $k' = m - k.$ Notice since $k$ and $k'$ are counters, (\ref{DRdist}) and (\ref{DLdist}) only differ by $\ell.$ Furthermore, $k$ summing from $k = k_0 +1$ to $m-2$ is the same as summing $k'$ from $2$ to $m-k_0-1,$ which equals $k_0$ or $k_0 -1.$ Therefore using (\ref{eq2}), it follows that 
\begin{equation}
w^*(P_1, 2\ell ) \quad  \le\quad   w^*(P_1',2\ell) \quad \quad<\quad\quad  \frac{3}{2} \sum_{k = 2}^{k_0} w^*(I_k \cap P_1', 2\ell)  \quad <\quad  \frac{3}{28}. 
\end{equation}
Thus, $w^*(D, 2\ell ) = w^*(D\setminus P_1, 2\ell ) + w^*(P_1, 2\ell )< \frac{27}{28},$ which contradicts the assumption that $D$ is an exponential dominating set. Through a symmetric argument, it can be shown that $w^*(D, \ell ) < \frac{27}{28}.$ Therefore either $1 \in K,$ or $m-1 \in K.$ In general, this shows that for every $z \in z(D),$ there exist $k \in K$ such that $k \equiv z \pm 1 \mod m.$ Without loss of generality, suppose that $k \equiv z+1 \mod n.$ Then, $z-1 \mod n \not\in K,$ else there will exist $z_0 \in z(D)$ for which $w^*(D, (3\ell +1)z_0 + \ell) , w^*(D, (3\ell +1)z_0 + 2\ell) < \frac{27}{28}.$ Thus, $|K| = Z(D).$ 
\end{proof}

The next lemma extends the result of Lemma \ref{locations} by determining the location of exponential dominating vertices in the intervals to either side of an interval that contains no members of $D.$ 
 
 \begin{lemma}\label{locations}
{ \rm Let $D$ be an exponential dominating set for $C_{n,[\ell]}$ and $\I$ be a partition such that $\I = \bigcup_{i=0}^{m-1} I_i, $ where $I_i = [ (3\ell +1)i, (3\ell+1) i + 3\ell]$ and $f^*(D,\I) = 2.$ Consider the interval $I_j$ with $f_j(D,\I) = 0.$ Let $k\equiv j - 1 \mod m$ and $k' \equiv j +1 \mod m.$  Then there are either two members of $D$ contained in $ [ (3\ell+1)k +2\ell +1 , (3\ell+1)k+3\ell ] $ and one member of $D$ contained in $ [ (3\ell+1)k'  , (3\ell+1)k' +\ell -1] |,$ or two members of $D$ contained in $ [ (3\ell+1)k'  , (3\ell+1)k' +\ell -1 ] $ and one member of $D$ contained in $ [ (3\ell+1)k +2\ell +1 , (3\ell+1)k+3\ell ].$ }
\end{lemma}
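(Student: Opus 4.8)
The plan is to fix the empty interval $I_j$ and to pin down the members of its two neighbours by forcing the weight inequality $w^*(D,v)\ge 1$ at the two interior vertices $p=(3\ell+1)j+\ell$ and $q=(3\ell+1)j+2\ell$ of $I_j$, which are the hardest vertices of $I_j$ to dominate. First I would invoke Lemma \ref{f_k=Z}: since $f_j(D,\I)=0$, exactly one of the neighbours $I_{j-1},I_{j+1}$ has $f=2$ while the other has $f\le 1$; reflecting $H$ if necessary, I may assume $f_{j-1}(D,\I)=2$, say $I_{j-1}\cap D=\{d,d'\}$, and $f_{j+1}(D,\I)\le 1$. A short preliminary step rules out $f_{j+1}(D,\I)=0$: if $I_j$ and $I_{j+1}$ were both empty, then applying Lemma \ref{f_k=Z} to $I_{j+1}$ would make $I_{j+2}$ doubled, and the estimates below would then give $w^*(D,q)<1$, a contradiction. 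Hence $I_{j+1}\cap D=\{e\}$, and it remains to locate $d,d',e$.

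The main device is the reduction to the $f^*=1$ setting used in Lemma \ref{f_k=Z}: write $D=(D\setminus P_1)\cup P_1$, where $P_1$ collects one member from each doubled interval, chosen so that $d'\in P_1$ and $f^*(D\setminus P_1,\I)=1$ with $I_j$ still empty, so that Lemma \ref{newBC2} applies to $D\setminus P_1$. To locate $e$, I would use part (\ref{condition2}) of Lemma \ref{newBC2} at $q$ to get $w^*\big((D\setminus P_1)\setminus\{e\},q\big)<\tfrac{5}{14}$, bound the tail $w^*(P_1\setminus\{d'\},q)<\tfrac{3}{28}$ by the geometric series in (\ref{eq2}) (these members lie at least two intervals from $I_j$), and bound $w^*(d',q)\le\tfrac14$ via Remark \ref{computeDist}. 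If $e$ were not in the first $\ell$ vertices of $I_{j+1}$ then $w^*(e,q)\le\tfrac14$, so $w^*(D,q)<\tfrac{5}{14}+\tfrac14+\tfrac14+\tfrac{3}{28}=\tfrac{27}{28}<1$, contradicting domination; therefore $e\in[(3\ell+1)k',(3\ell+1)k'+\ell-1]$ and in fact $w^*(e,q)=\tfrac12$.

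To force both of $d,d'$ into the last $\ell$ vertices of $I_{j-1}$, I would rerun the estimate at $q$ with $e$ now pinned: writing $w^*(D,q)=w^*(\{d,d'\},q)+w^*(e,q)+T_q$, where $T_q$ is the weight from all members outside $I_{j-1}\cup I_{j+1}$, domination forces $w^*(\{d,d'\},q)\ge \tfrac12-T_q$. Since a member of $I_{j-1}$ outside the last $\ell$ vertices contributes at most $\tfrac18$ to $q$ (Remark \ref{computeDist}), having even one of $d,d'$ outside that block gives $w^*(\{d,d'\},q)\le\tfrac14+\tfrac18=\tfrac38$, so a contradiction follows as soon as $T_q<\tfrac18$. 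I expect the sharp bound $T_q<\tfrac18$ to be the main obstacle: the crude geometric estimate (which only yields $T_q<\tfrac{6}{28}$) permits a doubled interval $I_{j\pm2}$ to push weight toward $q$, so I would control $T_q$ by combining the tail of (\ref{eq2}) with parts (\ref{condition4}) and (\ref{condition5}) of Lemma \ref{newBC2} and with the matching $|\{k:f_k=2\}|=Z$ from Lemma \ref{f_k=Z}, which confines the members of any nearby doubled interval to the side adjacent to \emph{its own} empty partner and hence far from $q$. As in the earlier proofs this is best organized by first choosing, among all admissible $D$, the set maximizing $w^*(\cdot,q)$, which makes the relevant configuration explicit; the location of $d,d'$ then follows, and the second alternative of the lemma is the mirror image obtained from the reflection fixed at the outset.
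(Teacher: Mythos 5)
Your setup and your location of the single member $e$ are sound, and they follow the same skeleton as the paper: pass to $D\setminus P_1$ so that Lemma \ref{newBC2} applies, then convert domination of the interior vertices of the empty interval into location constraints. Your computation $w^*(D,q)<\tfrac{5}{14}+\tfrac14+\tfrac14+\tfrac{3}{28}=\tfrac{27}{28}$ when $e$ lies outside the first $\ell$ vertices of $I_{k'}$ is correct, and it is in fact more careful than the corresponding step in the paper, since you work with genuine upper bounds (the $\tfrac{3}{28}$ tail from (\ref{eq2}) and $w^*(d',q)\le\tfrac14$) rather than with the lower bound $w^*(P_1,\cdot)>\tfrac17$.

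The second half, forcing \emph{both} of $d,d'$ into the last $\ell$ vertices of $I_k$, is a genuine gap, and it is exactly where you say it is: you need $T_q<\tfrac18$ and nothing you cite produces it. The obstruction is concrete. A member sitting at the first vertex of $I_{j+2}$ contributes $\tfrac1{16}$ to $q$ (for $\ell\ge 2$); the only non-circular constraint available at the empty partner $I_{j+3}$ (the analogue of your Step 3 bound, giving $w^*(\{g,g'\},(3\ell+1)(j+3)+\ell)>\tfrac{15}{28}$) rules out both members of a doubled $I_{j+2}$ lying in its first $\ell$ vertices, but still permits one at the start and one at the end, i.e.\ a contribution up to $\tfrac1{16}+\tfrac1{64}$; adding even $\tfrac1{32}$ from $I_{j-2}$ and the crude far tail keeps $T_q$ above $\tfrac18$. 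Your proposed repairs do not close this: parts (\ref{condition4}) and (\ref{condition5}) of Lemma \ref{newBC2} bound $w^*(D\setminus d_k,\cdot)$, which is the wrong shape of estimate for controlling far contributions, and the statement that members of a nearby doubled interval are confined to the side adjacent to \emph{its own} empty partner is precisely the lemma you are proving, applied at another empty interval, so invoking it is circular. You should also know that this is the point where the paper's own proof is thinnest: it pins the doubled members by asserting that (\ref{ellclosevertex}) and (\ref{2ellclosevertex}) force $w^*(d_1,\ell)\ge\tfrac14$ and $w^*(d_1,2\ell)\ge\tfrac12$, a deduction that treats a sufficient condition as necessary (with $d_1'$ at the near corner of the doubled interval and a favorable $P_1$ tail, one member in the second block of $\ell$ vertices satisfies both inequalities). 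So the missing bound cannot be discharged by citation; closing it requires a finer, structure-aware case analysis of $I_{j\pm2}$ and $I_{j\pm3}$ (including sharpened far tails) that neither your sketch nor the paper actually carries out.
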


\begin{proof}
For the sake of simplicity, let $f(D) = f(D,\I),$ $f^*(D) = f^*(D,\I),$ $z(D) =z(D,\I),$ and $Z(D) = Z(D,\I).$ Let $K = \{k : f_k(D) = 2\}$ and let $I_k \cap D=\{ d_k , d_k' \}$ for every $k \in K.$ Define $P = P_1 \cup P_2$ such that $P_1 = \{ d'_k : k \in K\}$ and $P_2 = \{ d_k: k \in K\}.$ Without loss of generality, consider $P_1$ and notice that there is exactly one member of $D\setminus P_1$ in every nonempty interval, so $f^*(D\setminus P_1 ) = 1$ and $Z(D \setminus P_1) \ge 1$. By (\ref{condition1}) of Lemma \ref{newBC2}, every $z\in z(D)$ has that $w^*(D \setminus P_1 , (3\ell +1)z + \ell ), w^*(D \setminus P_1 , (3\ell +1)z + 2\ell ) < \frac{6}{7}.$ To maintain that $(D,w^*)$ dominates $C_{n,[\ell]},$  it follows that $w^*(P_1 , (3\ell +1)z + \ell ), w^*( P_1 , (3\ell +1)z + 2\ell ) > \frac{1}{7}.$ Without loss of generality assume $0 \in z(D).$ Then Lemma \ref{f_k=Z} shows that either $1 \in K$ or $m-1\in K$. Suppose $1 \in K$ and consider $d_1 \in P_2.$ Then by (\ref{condition2}) of Lemma \ref{newBC2}, $w^*( D \setminus( P_1 \cup d_1) , \ell ) < \frac{17}{28}$ and $w^*( D \setminus( P_1 \cup d_1) , 2\ell ) < \frac{5}{14}.$ To ensure that $\ell$ and $2\ell$ receive sufficient weight from $D,$ the following conditions must hold
\begin{eqnarray}
w^*( P_1 \cup d_1 , \ell)  &>& \frac{11}{28}, \label{ellclosevertex} \\
w^*( P_1 \cup d_1 , 2\ell) & >& \frac{9}{14}. \label{2ellclosevertex}
\end{eqnarray}
Since $w^*(P_1 ,  \ell ), w^*( P_1 , 2\ell ) > \frac{1}{7},$ it follows that $w^*( d_1,\ell ) \ge \frac{1}{4}$ and $w^*( d_1, 2\ell ) \ge \frac{1}{2},$ satisfy (\ref{ellclosevertex}) and (\ref{2ellclosevertex}). This implies that $d_1\in [ 3\ell+1,4\ell ].$ Let $d_{m-1} = I_{m-1} \cap D$ and note that a similar argument gives that $d_{m-1} \in [ (3\ell+1)(m-1) + 2\ell +1 , (3\ell+1)(m-1) + 3\ell].$ Additionally, through a similar argument with respect to $P_2,$ it can be shown that $d_1' \in [3\ell+1 , 4\ell ].$ Now consider the case when $m-1 \in K.$ For $d_{m-1}, d_{m-1}' \in I_{m-1} \cap D$ and $d_1 \in I_1 \cap D,$ a symmetric argument gives that $d_{m-1} , d_{m-1}' \in [ (3\ell+1)(m-1) + 2\ell +1 , (3\ell+1)(m-1) + 3\ell]$ and $d_1\in [ 3\ell+1,4\ell ].$
 \end{proof}

The following lemma shows that if there are two exponential dominating vertices that are within a certain distance of each other, then there exists a shift of these two vertices that creates a new exponential dominating set.
 
\begin{lemma}\label{2shift}{\rm
Let $D$ be an exponential dominating set for $C_{n , [\ell]}.$ Suppose that there exists $i, j \in D$ such that $i < j$ and $\dist_H(i, j) \le \ell +1.$ Consider $S = (V(C_{n, [\ell]}) \setminus D) \cap [ j+\ell ,  i -\ell ].$ Let $a_0,b_0 \in S$ so that $\dist_H(a_0, i-\ell) < \dist_H(a , i-\ell)$ for every $a \in S\setminus a_0$ and $\dist_H(b_0, j + \ell) < \dist_H(b , j + \ell)$  and for every $b \in S \setminus b_0.$ Then $D' = (D \setminus \{i,j\}) \cup \{a,b \}$ is an exponential dominating set.}
\end{lemma}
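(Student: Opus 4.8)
The plan is to verify directly that $w^*(D',v)\ge 1$ for every $v\in V(C_{n,[\ell]})$, exploiting the fact that for \emph{porous} domination the weight is additive in the dominating set: since $w^*(u,v)=(\tfrac12)^{\dist(u,v)-1}$ does not depend on $D$, I can write
\[ w^*(D',v)=w^*(D,v)-w^*(i,v)-w^*(j,v)+w^*(a_0,v)+w^*(b_0,v). \]
Writing $\Delta(v):=w^*(a_0,v)+w^*(b_0,v)-w^*(i,v)-w^*(j,v)$, it follows that whenever $\Delta(v)\ge 0$ the domination inequality $w^*(D,v)\ge 1$ is inherited by $D'$. So I would partition $V$ into three kinds of vertices: members of $D'$ (which dominate themselves with weight $w^*(v,v)=2$), a ``cluster region'' around $\{i,j\}$ where $\Delta(v)$ may be negative and so must be treated by hand, and the remaining ``long arc,'' where I would show $\Delta(v)\ge 0$.

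The key structural observation I would record first is that the two positions $i-\ell$ and $j+\ell$ always belong to $D'$. Indeed, if $i-\ell\notin D$ then it is the closest non-$D$ vertex to itself inside $S$, so $a_0=i-\ell$; and if $i-\ell\in D$ then (as $i-\ell\not\equiv i,j \pmod n$ whenever $n>2\ell+1$) it is retained in $D'$; the same holds for $j+\ell$. Using Remark \ref{computeDist} to pass from $\dist_H$ to $\dist=\lceil \dist_H/\ell\rceil$, the vertex $i-\ell$ dominates $[i-2\ell,i]$ and $j+\ell$ dominates $[j,j+2\ell]$ each with weight $1$, while every $v$ strictly between $i$ and $j$ lies within $H$-distance $2\ell$ of both (here I use $\dist_H(i,j)\le\ell+1$), so it collects $\tfrac12+\tfrac12=1$. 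Hence the cluster region $[i-2\ell,\,j+2\ell]$ is dominated by $\{i-\ell,j+\ell\}\subseteq D'$ alone, independently of where $a_0,b_0$ actually land.

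For the long arc I would argue on, say, the $j$-side: for $v\notin D'$ on that side we have $v$ beyond $b_0$, and the geodesic from $v$ into the cluster passes through $b_0$, then $j$, then $i$, so $\dist_H(v,j)\ge \dist_H(v,b_0)+\ell$ and $\dist_H(v,i)\ge\dist_H(v,b_0)+\ell+1$. Converting through Remark \ref{computeDist} gives $\dist(v,j)\ge\dist(v,b_0)+1$ and $\dist(v,i)\ge\dist(v,b_0)+1$, hence $w^*(j,v)\le\tfrac12 w^*(b_0,v)$ and $w^*(i,v)\le\tfrac12 w^*(b_0,v)$. Thus the surplus supplied by the near flank, $w^*(b_0,v)-w^*(j,v)\ge\tfrac12 w^*(b_0,v)$, dominates the deficit created by the far flank, $w^*(i,v)-w^*(a_0,v)<\tfrac12 w^*(b_0,v)$ (strict because $w^*(a_0,v)>0$), so $\Delta(v)>0$. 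The $i$-side is symmetric. Combining the three parts yields $w^*(D',v)\ge 1$ everywhere, so $D'$ is an exponential dominating set.

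The part I expect to be the real obstacle is the bookkeeping forced by displacement of $a_0,b_0$: when vertices at $i-\ell$ or $j+\ell$ already lie in $D$, the inserted vertex slides outward along the arc, and I must make sure no vertex falls through the cracks between the three regions. The fact that rescues every case is that the stretch between an inserted vertex and its ideal position $i-\ell$ (resp.\ $j+\ell$) consists entirely of members of $D$ (by minimality of the displacement), which survive in $D'$ and dominate themselves, while the ideal positions themselves are always occupied---exactly what the cluster estimate uses. I would also dispose of the degenerate case $a_0=b_0$ (when the arc $S$ is too short to host two distinct vertices) and confirm $i-\ell\not\equiv j\pmod n$ in the range $n>2\ell+1$ where $\lceil n/(3\ell+1)\rceil$ is the operative bound.
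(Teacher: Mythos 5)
Your proposal is correct and takes essentially the same route as the paper's proof: both note that $i-\ell$ and $j+\ell$ must lie in $D'$, dominate the central stretch via the two-endpoint argument of Remark \ref{GenintDom} (with the surviving $D$-vertices between $a_0,b_0$ and their ideal positions dominating themselves), and handle the long arc by the halving estimate that removing $i,j$ pushes each source at least graph-distance $1$ farther than the nearer inserted vertex, so the lost weight is at most recovered. The one imprecision---for $v$ near the antipode the geodesic to the far vertex $i$ may pass through $a_0$ rather than $b_0$, so your stated bound $\dist_H(v,i)\ge\dist_H(v,b_0)+\ell+1$ can fail---is harmless, since in that case $w^*(i,v)\le\tfrac{1}{2}w^*(a_0,v)$ and your inequality $w^*(i,v)-w^*(a_0,v)<\tfrac{1}{2}w^*(b_0,v)$ still holds; the paper's own WLOG step elides the same point.
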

\begin{proof}
Consider $D' = (D \setminus \{i,j\}) \cup \{a_0,b_0 \}.$ As $\dist_H( i -\ell , j + \ell ) \le 3\ell +1,$  Remark \ref{GenintDom} shows that $ i-\ell , j+\ell $ exponentially dominates $[ i-\ell ,   j+\ell ].$ Then $w^*(D', u) \ge 1$ for all $u\in [a_0,b_0].$ Let $v \in V(C_{n,[\ell]}) \setminus [a_0,b_0] $ and without loss of generality, suppose that $w^*( i, v) \ge w^*( j,v).$ Observe that $w^*( i,v) + w^*( j,v) \le  2w^*( i,v) \le w^*( a_0  , v),$ which implies that $w^*(D , v) \le w^*(D',v).$ Thus $D'$ is an exponential dominating set.
\end{proof}
 \end{subsection}
 
 \begin{subsection}{Main Results}\label{main}
The main results of this paper consists of the following two theorems. Theorem \ref{lmindom} determines the structure of the minimum porous exponential dominating set for $ C_{n,[\ell]},$ when $3\ell+1$ divides $n.$ In this proof, all but one case is shown to either have a porous exponential dominating set that is not minimum, or to have a set of vertices that is not a porous exponential dominating set. Theorem \ref{mainCirculantThm} determines the explicit formula for $\gamma_e^*(C_{n,[\ell]})$ and $\gamma_e(C_{n,[\ell]}).$ In this proof Theorem \ref{lmindom} and Remark \ref{GenintDom} to determine a lower bound for $\exd(C_{n,[\ell]})$ and upper bound for $\gamma_e(C_{n,[\ell]}),$ respectively. Additionally (\ref{PNPinequality}) is used to link $ \gamma_e^*(C_{n,[\ell]})$ and $\gamma_e(C_{n,[\ell]}).$

\begin{theorem}\label{lmindom}{\rm
Let $n = (3\ell+1)m \ge 6\ell+2.$ Let $D$ is a minimum exponential dominating set for $C_{n,[\ell]},$ and $\I$ be a partition such that $ \I =  \bigcup_{i=0}^{m-1} I_i, $ where $I_i = [ (3\ell +1)i, (3\ell+1) i + 3\ell].$ Then $f^*(D,\ell) =1$ and $Z(D,\ell) = 0$ for any partition $\I.$ Furthermore, $D$ is unique up to isomorphism. }\end{theorem}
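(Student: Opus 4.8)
The plan is to combine the counting bound $|D|\le m$ with the structural lemmas to pin $D$ down to exactly one vertex per interval, and then to upgrade that to full rigidity by running the same argument against every rotation of the partition. I would first record the counting skeleton. By Remark \ref{DminSet} a minimum set satisfies $|D|\le m$, so $\sum_k f_k(D,\I)\le m$ across the $m$ intervals. If $Z(D,\I)=0$, then every interval is nonempty and this bound forces $f_k(D,\I)=1$ for all $k$; hence $f^*(D,\I)=1$ and $|D|=m$. So it suffices to exclude $Z(D,\I)\ge 1$, and the whole argument amounts to showing that every structure other than ``one vertex in each interval'' is either not minimum or not an exponential dominating set, exactly as advertised for the proof of Theorem \ref{mainCirculantThm}. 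I would organize this as a case split on $f^*(D,\I)$, assuming an empty interval is present.

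For $f^*\ge 3$: since $f^*\ge 3$ forces $Z\ge 2$ (Remark \ref{f*Zeros}), Algorithm \ref{shiftalg} applies and Lemma \ref{shiftalgproof} returns an exponential dominating set of the same (minimum) size with $Z$ decreased by two and $f^*$ not increased; iterating this drives $f^*$ down to $2$, while the distinguished residual triple configuration is killed outright by Lemma \ref{f*=3notDom}, which shows it fails to dominate (the points $\ell$ and $8\ell+2$ receive total weight $\tfrac{447}{224}<2$). For $f^*=2$: Lemma \ref{f_k=Z} shows each empty interval is flanked by an interval holding two members of $D$, with the doubles equinumerous with the empties, and Lemma \ref{locations} confines the two vertices of such a double to a window of length $\ell$, hence within $H$-distance $\ell+1$ of each other. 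I would then use the refined bounds (\ref{condition2})--(\ref{condition5}) of Lemma \ref{newBC2} together with the spreading move of Lemma \ref{2shift} to show that the forced positions cannot supply weight $1$ everywhere (so the configuration is non-dominating) or produce a same-size set with strictly smaller $Z$ (so $D$ was not minimal); I would also note the cheap subcase $f^*=2,\ Z=0$ is impossible, since it forces $|D|=m+|\{k:f_k=2\}|>m$. Finally, $f^*=1$ with $Z\ge 1$ is immediate from (\ref{condition1}) of Lemma \ref{newBC2}: the midpoint $(3\ell+1)z+\ell$ of an empty interval receives weight strictly below $\tfrac{6}{7}<1$. Putting the cases together yields $f^*(D,\I)=1$ and $Z(D,\I)=0$, and therefore $|D|=m$ with exactly one vertex per interval.

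For the ``any partition'' clause and uniqueness I would invoke the rotational symmetry $v\mapsto v+1$ of $C_{n,[\ell]}$: the partition $\I$ is arbitrary among the $3\ell+1$ shifts of the base partition, so the conclusion $f^*=1,\ Z=0$ holds simultaneously for every shift. A set with exactly one vertex in every window of length $3\ell+1$, for every shift, must have consecutive members at $H$-distance exactly $3\ell+1$: a smaller gap would place two members inside some window (contradicting $f^*=1$), and a larger gap would leave some window empty (contradicting $Z=0$). Hence $D=\{\,c+(3\ell+1)t \bmod n : 0\le t\le m-1\,\}$ for a single offset $c$, which is unique up to the rotation $v\mapsto v+1$, i.e. unique up to isomorphism.

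The main obstacle is the $f^*=2$ case. The difficulty is that Lemmas \ref{f_k=Z} and \ref{locations} only provide \emph{necessary} positional constraints, and Lemma \ref{2shift} by itself merely transforms one dominating set into another of the same size; converting this into a genuine contradiction requires carefully combining the pinned positions with the weight estimates (\ref{condition2})--(\ref{condition5}) so that some forced configuration is shown to underdominate a point of the empty interval's neighborhood. A secondary subtlety is ensuring that the conclusion passes from an extremal (minimum-$Z$) minimum set to \emph{every} minimum set; this is precisely what the per-partition argument and the gap-rigidity step in the last paragraph are designed to deliver.
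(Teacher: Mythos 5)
Your overall strategy coincides with the paper's: the paper formalizes your ``iterate the moves'' plan as a strong induction on $\alpha = f^*(D,\I)+Z(D,\I)$, with your two base cases ($f^*\ge 2,\ Z=0$ killed by counting against Remark \ref{DminSet}; $f^*=1,\ Z\ge 1$ killed by (\ref{condition1}) of Lemma \ref{newBC2}), the same treatment of $f^*\ge 3$ via Algorithm \ref{shiftalg}, Lemma \ref{shiftalgproof} and Lemma \ref{f*=3notDom}, and essentially the same shifted-partition rigidity argument for uniqueness. So the skeleton is fine; the problem is that the one case you defer is genuinely not closable with the tools you propose.

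The gap is the case $f^*(D,\I)=2$, and concretely the subcase $Z(D,\I)=1$. There, Lemma \ref{f_k=Z} and counting give $|D|=m$ with exactly one doubly-occupied interval, one empty interval, and singles elsewhere, and Lemma \ref{locations} pins the double inside a window of $\ell$ vertices. But applying the spreading move of Lemma \ref{2shift} to that double outputs a dominating set of size $m$ with exactly one vertex in \emph{every} interval --- precisely the legitimate extremal configuration --- so ``a same-size set with strictly smaller $Z$'' yields no contradiction in this subcase; moreover the inference ``smaller $Z$, hence $D$ was not minimal'' silently uses an induction hypothesis that is only available when the output still has $f^*+Z\ge 2$, i.e.\ only when $Z(D,\I)\ge 2$. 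The paper therefore argues differently here: it proves that a set with this structure cannot be an exponential dominating set at all, via a forced-propagation argument that appears nowhere in your proposal. Applying (\ref{condition1}) of Lemma \ref{newBC2} to $D$ minus one member of the double forces $d_0,d_1\in[4\ell-1,4\ell]$ and $d_{m-1}=(3\ell+1)(m-1)+2\ell+1$; then, window by window, the same weight bound forces $d_2=8\ell+1$ and $d_{j_0}=(3\ell+1)j_0+2\ell-1$ for every $3\le j_0\le m-1$; at $j_0=m-1$ this plants a second forced vertex in $I_{m-1}$, contradicting $f_{m-1}(D,\I)=1$. Without this argument (or a substitute proving non-domination of the $f^*=2,\ Z=1$ configuration), the case analysis is incomplete, and it is exactly the step that the weight bounds (\ref{condition2})--(\ref{condition5}) and Lemma \ref{2shift} alone cannot supply.
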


\begin{proof}
Let $D$ be an exponential dominating set for $C_{n,[\ell]}.$ For the sake of simplicity, let $f(D) = f(D,\I),$ $f^*(D) = f^*(D,\I)$ and $Z(D) = Z(D,\I).$ Through induction, we show the contrapositive of the statement: if $ 2 \le f^*(D)+ Z(D)  \le 3\ell +1,$ then $D$ cannot be a minimum exponential dominating set. 

\begin{enumerate}[BC 1]
 \item Suppose that $f^*(D) \ge 2$ and $Z(D)=0.$ Through counting the exponential dominating vertices, it follows that $|D| \ge  m + 1.$ Remark \ref{DminSet}, shows that there exists an exponential dominating set $D^*$ for $C_{n,[\ell]}$ such that $|D^*| = m.$ Therefore $D$ cannot be a minimum exponential dominating set. 
 
\item Suppose that $f^*( D ) = 1$ and $Z( D) \ge 1.$ By (\ref{condition1}) of Lemma \ref{newBC2}, there exists $2\ell \in V(C_{n,[\ell]})$ such that $w^*(D,2\ell) < \frac{6}{7}.$ Thus it is not possible for $2\ell$ to receive sufficient weight from $D,$ which implies that $D$ is not an exponential dominating set.  
\end{enumerate}
Assume that if $2\le f^*(D) + Z(D) < \alpha,$ then $D$ is not a minimum exponential dominating set. Now suppose $f^*(D) + Z(D)  = \alpha.$ We have the following three cases:
 
 \begin{enumerate}
 \item \label{c1} Suppose that $4 \le f^*(D ) \le 3\ell +1.$ Then by Remark \ref{f*Zeros} we have that $Z(D) \ge 3.$ With $D$ and $\I,$ we construct $D' \subset V(C_{n,[\ell]})$ using Algorithm \ref{shiftalg}. Then Lemma \ref{shiftalgproof} shows that $D'$ is an exponential dominating set such that $Z(D) = Z(D') -2,$ $f^*(D) -2 \le f^*(D') \le f^*(D),$ and $|D| = |D'|.$ Therefore $Z(D')  \ge 1$ and $2 \le f^*(D) -2\le f^*(D') \le f^*(D).$ This implies that $3 \le f^*(D') + Z(D') \le \alpha -1.$ By the induction hypothesis, $D'$ is not a minimum exponential dominating set. Thus $D$ cannot be a minimum exponential dominating set. 

\item \label{c2} Suppose that $f^*(D) = 3.$ Then by Remark \ref{f*Zeros}, $Z(D) \ge 2.$ With $D$ and $\I,$ construct $D' \subset V(C_{n,[\ell]})$ using Algorithm \ref{shiftalg}. Then Lemma \ref{shiftalgproof} shows that $D'$ is an exponential dominating set such that $Z(D) = Z(D') -2,$ $f^*(D) -2 \le f^*(D') \le f^*(D),$ and $|D| = |D'|.$ Therefore $Z(D')  \ge 0$ and $ 1\le f^*(D') \le 3.$ Consider the following three subcases:	
	\begin{enumerate}
	\item Consider the case when $f^*(D') \ge 1$ and $Z(D') \ge 1.$ Then we have that $ 2 \le f^*(D') + Z(D') \le \alpha -1.$ By the induction hypothesis, $D'$ is not a minimum exponential dominating set. Thus $D$ cannot be a minimum exponential dominating set.
	\item  Consider the case when $f^*(D') = 1$ and $Z(D' ) =0.$ This implies that there exists $I_i, I_j, I_k \subset \I$ for which $f_i(D) = f_k(D) = 0,$  $f_j(D) = 3,$ and $f_t(D) =1$ for all remaining $I_t \subset \I.$ By Lemma \ref{f*=3notDom}, $D$ is not an exponential dominating set.   
	\end{enumerate}

 \item \label{c3} Suppose that $f^*(D) = 2.$ Then $Z(D) \ge 1$ by Remark \ref{f*Zeros}. Without loss of generality we assume that the interval $I_0 = [0 , 3\ell]$ has $f_0(D) = 0.$ Lemma \ref{f_k=Z} show that either the intervals $I_1$ and $I_{m-1}$ have that $f_1(D) =2$ or $f_{m-1}(D) =2.$  Without loss of generality, suppose that $f_1(D) =2.$  Let  $I_1 \cap D = \{ d_0, d_1\}$ and let $d_i = I_i \cap D $ for all $2\le i \le m-1.$ Then consider the following two cases:
 
	 \begin{enumerate}
	 \item Suppose that $Z(D) \ge 2.$ By Lemma \ref{locations}, $d_0 , d_1 \in [3\ell+1,4\ell].$ Consider $D' =  (D \setminus \{d_0,d_1\} ) \cup \{  d_0 - \ell , d_1 + \ell  \}$ and Lemma \ref{2shift} shows that $D'$ is an exponential dominating set. By construction, we know that $d_0 - \ell \in I_0$ and $d_1 + \ell \in I_1.$ Therefore we have that $|D| = |D' |,$  $f_1(D') = 1,$ $f_0(D') = 1,$ and $f_t(D') = f_t(D)$ for all remaining $I_t \subset \I.$ This implies that $Z(D') \ge 1$ and $1\le f^*(D') \le 2.$ Then we have that $2 \le f^*(D') + Z(D') \le \alpha -1.$ By our induction hypothesis, $D'$ is not a minimum exponential dominating set. Thus, $D$ cannot be a minimum exponential dominating set.

	 \item Suppose that $Z(D) = 1.$ Then $f_i(D) = 1$ for $2\le i \le m-1.$ Observe that by Lemma \ref{locations} the minimum requirement on $d_0, d_1,d_{m-1}$ to ensure that $I_0$ gets exponentially dominated by $D$ is that $d_0, d_1 \in [4\ell -1, 4\ell]$ and $d_{m-1} = (3\ell +1)(m-1) + 2\ell +1.$ Through symmetry of the above argument, the minimum requirement to ensure that the interval $[ 4\ell + 1, 7\ell + 1]$ is exponentially dominated is that $d_2 = 8\ell + 1.$ Fix $j_0$ such that $3\le j_0 \le m-1,$ and suppose that the interval $[ (3\ell +1)(j_0-1) + 2\ell, (3\ell +1)j_0 + 2\ell - 1  ]$ contains no members of $D.$ Let $d \in \{ d_0, d_1 \}$ and notice that there is one member of $D\setminus d$ in every nonempty interval. Therefore $f^*(D\setminus d) = 1$ and $Z(D\setminus d) \ge 0.$ By (\ref{condition1}) of Lemma \ref{newBC2}, $w^*(D\setminus d , (3\ell+1)j_0  < \frac{6}{7}.$ This condition forces $w^*(d , (3\ell+1)j_0 ) > \frac{1}{7}.$ However $d \in I_1,$ so $w^*(d , (3\ell+1)j_0  ) < \frac{1}{7}.$ This gives that $w^*(D, (3\ell+1)j_0 ) < 1,$ a contradiction. Therefore the minimum requirement to ensure $[ (3\ell +1)(j_0-1) + 2\ell , (3\ell +1)j_0 + 2\ell -1 ]$ is exponentially dominated is that $d_{j_0} = (3\ell +1)j_0 + 2\ell -1.$  This implies $ (3\ell +1)(m-1) + 2\ell -1 , (3\ell +1)(m-1) + 2\ell  \in I_{m-1} \cap D,$ which contradicts that $f_{m-1}(D) = 1.$ Therefore $(3\ell +1)(m-1) + 2\ell -1 \not\in D$ and $w^*(D, (3\ell+1)(m-1) ) < 1.$ Thus it is not possible for $(3\ell+1)(m-2) +3\ell$ to receive sufficient weight from $D,$ which implies that $D$ is not an exponential dominating set.  

\end{enumerate}
\end{enumerate}
Through induction, it has been shown that if $f^*(D)+ Z(D) \ge 2,$ then $D$ is not a minimum exponential dominating set. Therefore if $D$ is an exponential dominating set, then $|D| =m $ such that $f^*(D) =1$ and $Z(D) = 0$ for all $3\ell +1$ distinct partitions $\I.$ What is left to show is that $D$ is unique up to isomorphism. Suppose that $0\in D$ and fix the remaining members of $D.$ Let $I_0 \in \I$ such that $I_0 = [0 , 3\ell].$ Therefore none of the remaining elements of $I_0$ can be members of $D.$ Shift the partition by one step to construct the interval $I'_0 =[ 1,  3\ell+1].$ Note that $| I'_0 \cap D| =1$ and $2, 3,$ $\ldots, 3\ell$ $\not \in D,$ so we must have $3\ell+1 \in D.$ Continuing this argument gives that $D = \{ (3\ell+1)k : 0 \le k \le m-1\}.$ Thus $D$ is unique up to isomorphism.
\end{proof}

\noindent \textit{Proof of Theorem \ref{mainCirculantThm}: } 
Let  $n = (3\ell+1)m + r$ and $D$ be a porous exponential dominating set for $C_{n,[\ell]}$ such that $|D| \le m.$ In the case where $r = 0,$ Theorem \ref{lmindom} shows that $D$ is a minimum porous exponential dominating set such that $|D| =m.$ Remark \ref{GenintDom} shows that $D$ forms a non-porous exponential dominating set. Thus using (\ref{PNPinequality}) we have that \[  \frac{n}{3\ell + 1}  \le \exd(C_{n,[\ell]}) \le \gamma_e(C_{n,[\ell]}) \le  \frac{n}{3\ell + 1}.\]
Consider the case when $r > 0.$ We first partition $H$ into $m+1$ intervals. Then notice that there must be at least one interval that contains no dominating vertices. We choose the partition $ \I = \displaystyle \cup_{i=0}^m I_i $  around $H$ such that $I_i = [ (3\ell + 1)i , (3\ell +1)i + 3\ell]$ for $0\le i \le m-1,$  $I_m = [(3\ell +1)m,$ $(3\ell +1)m + r - 1],$ and $f_{m+1}(D) = 0.$ Consider the graph $C_{n',[\ell]},$ where $n' = (3\ell+1)m.$ We define the vertex map  $\varphi : V(C_{n,[\ell]}) \to V(C_{n',[\ell]})$ such that $\varphi(i) = i $ for every $i \in \{0 ,1, \ldots, n' -1\}.$ Let $i,j \in V(C_{n,[\ell]}).$ As $\dist_H(\varphi(i), \varphi(j)) \le \dist_H(i,j),$ it follows that $D$ forms an exponential dominating set for $C_{n',[\ell]}.$ Theorem \ref{lmindom} shows that a minimum exponential domination set of for $C_{n',[\ell]}$ must have cardinality $m$ and is unique up to isomorphism. As $|D| \le m,$ $D$ must form a minimum exponential dominating set for $C_{n',[\ell]}$ with $|D| =m.$ Without loss of generality, let $D = \{   (3\ell +1)t : 0 \le t \le m-1\}.$ See Figure \ref{varphi} for an illustration of $D$ and the mapping $\varphi.$
 \begin{figure}[!htp]
\centering
\begin{tikzpicture}[scale = .95]
\node (z) at (-0.25, 0) {$|$};
\node (aa1) at (1.2,1) {$I_{m-2}$};
\node[circle, fill = white, draw] (a) at (0,0) {};
\node[circle, fill = white, draw] (b) at (0.40,0) {};
\node[circle, fill = white, draw] (c) at (.8,0) {};
\node[circle, fill = white, draw] (e) at (1.4,0) {};
\node[circle, fill = white, draw] (f) at (1.8,0) {};
\node[circle, fill = white, draw] (g) at (2.2,0) {};
\node (z1) at (2.5, 0) {$|$};
\draw[ thick, dotted ] (c) -- (e);

\node (aa1) at (3.9,1) {$I_{m-1}$};
\node[circle, fill = white, draw] (a1) at (2.8,0) {};
\node[circle, fill = white, draw] (b1) at (3.2,0) {};
\node[circle, fill = white, draw] (c1) at (3.6,0) {};
\node[circle, fill = white, draw] (d1) at (4.2,0) {};
\node[circle, fill = white, draw] (e1) at (4.6,0) {};
\node[circle, fill = white, draw] (f1) at (5,0) {};
\node (z3) at (5.5, 0) {$|$};
\draw[ thick, dotted ] (c1) -- (d1);

\draw[ thick, dotted ] (b1) -- (c1);
\node (aa1) at (6.7,1) {$I_{m}$};
\node[circle, fill = white, draw] (a2) at (5.8,0) {};
\node[circle, fill = white, draw] (c2) at (6.2,0) {};
\node[circle, fill = white, draw] (d2) at (6.8,0) {};
\node[circle, fill = white, draw] (f2) at (7.2,0) {};
\node[circle, fill = white, draw] (g2) at (7.6,0) {};
\node (z5) at (7.9, 0) {$|$};
\draw[ thick, dotted ] (c2) -- (d2);

\node (aa1) at (9.5,1) {$I_0$};
\node[circle, fill = white, draw] (a3) at (8.4,0) {};
\node[circle, fill = white, draw] (b3) at (8.8,0) {};
\node[circle, fill = white, draw] (c3) at (9.2,0) {};
\node[circle, fill = white, draw] (d3) at (9.8,0) {};
\node[circle, fill = white, draw] (f3) at (10.2,0) {};
\node[circle, fill = white, draw] (g3) at (10.6,0) {};
\node (z7) at (10.9, 0) {$|$};
\draw[ thick, dotted ] (c3) -- (d3);

\node (aa1) at (12.3,1) {$I_1$};
\node[circle, fill = white, draw] (a4) at (11.2,0) {};
\node[circle, fill = white, draw] (b4) at (11.6,0) {};
\node[circle, fill = white, draw] (c4) at (12,0) {};
\node[circle, fill = white, draw] (d4) at (12.6,0) {};
\node[circle, fill = white, draw] (f4) at (13,0) {};
\node[circle, fill = white, draw] (g4) at (13.4,0) {};
\node (z8) at (13.7, 0) {$|$};
\draw[ thick, dotted ] (c4) -- (d4);

\node[circle, fill = white, draw] (end1) at (14,0) {};

\node (start) at (-1.55,0){};
\node (aaaaa) at (-2.55,0){$C_{n,[\ell]}$};
\node[circle, fill = white, draw] (start1) at (-.55,0) {};

\node (end) at (15,0){};

\draw[ thick, dotted ] (start) -- (start1);
\draw[ thick, dotted ] (end1) -- (end);

\node (aa1) at (6.7,-0.5) {$\varphi$};

\node (z) at (-0.25, -1) {$|$};
\node[circle, fill = blue, draw] (1a) at (0,-1) {};
\node[circle, fill = white, draw] (1b) at (0.40,-1) {};
\node[circle, fill = white, draw] (1c) at (.8,-1) {};
\node[circle, fill = white, draw] (1e) at (1.4,-1) {};
\node[circle, fill = white, draw] (1f) at (1.8,-1) {};
\node[circle, fill = white, draw] (1g) at (2.2,-1) {};
\node (z1) at (2.5, -1) {$|$};

\draw[ thick, dotted ] (1c) -- (1e);

\node[circle, fill = blue, draw] (1a1) at (2.8,-1) {};
\node[circle, fill = white, draw] (1b1) at (3.2,-1) {};
\node[circle, fill = white, draw] (1c1) at (3.6,-1) {};
\node[circle, fill = white, draw] (1d1) at (4.2,-1) {};
\node[circle, fill = white, draw] (1e1) at (4.6,-1) {};
\node[circle, fill = white, draw] (1f1) at (5,-1) {};
\node (z3) at (6.6, -1) {$|$};
\draw[ thick, dotted ] (1c1) -- (1d1);

\draw[ thick, dotted ] (1b1) -- (1c1);

\node[circle, fill = blue, draw, label = below: \tiny $0$] (1a3) at (8.4,-1) {};
\node[circle, fill = white, draw] (1b3) at (8.8,-1) {};
\node[circle, fill = white, draw] (1c3) at (9.2,-1) {};
\node[circle, fill = white, draw] (1d3) at (9.8,-1) {};
\node[circle, fill = white, draw] (1f3) at (10.2,-1) {};
\node[circle, fill = white, draw, label = below: \tiny $3\ell$] (1g3) at (10.6,-1) {};
\node (z7) at (10.9, -1) {$|$};
\draw[ thick, dotted ] (1c3) -- (1d3);

\node[circle, fill = blue, draw] (1a4) at (11.2,-1) {};
\node[circle, fill = white, draw] (1b4) at (11.6,-1) {};
\node[circle, fill = white, draw] (1c4) at (12,-1) {};
\node[circle, fill = white, draw] (1d4) at (12.6,-1) {};
\node[circle, fill = white, draw] (1f4) at (13,-1) {};
\node[circle, fill = white, draw] (1g4) at (13.4,-1) {};
\node (z8) at (13.7, -1) {$|$};
\draw[ thick, dotted ] (1c4) -- (1d4);

\node[circle, fill = blue, draw] (1end1) at (14,-1) {};
\node (1start) at (-1.55,-1){};
\node (aaaaa) at (-2.55,-1){$C_{n',[\ell]}$};
\node[circle, fill = white, draw] (1start1) at (-.55,-1) {};
\node (1end) at (15,-1){};
\draw[ thick, dotted ] (1start) -- (1start1);
\draw[ thick, dotted ] (1end1) -- (1end);

\draw[ -> ] (a) -- (1a);
\draw[ -> ] (b) -- (1b);
\draw[ -> ] (c) -- (1c);
\draw[ -> ] (e) -- (1e);
\draw[ -> ] (f) -- (1f);
\draw[ -> ] (g) -- (1g);

\draw[ -> ] (a1) -- (1a1);
\draw[ -> ] (b1) -- (1b1);
\draw[ -> ] (c1) -- (1c1);
\draw[ -> ] (d1) -- (1d1);
\draw[ -> ] (e1) -- (1e1);
\draw[ -> ] (f1) -- (1f1);

\draw[ -> ] (a3) -- (1a3);
\draw[ -> ] (b3) -- (1b3);
\draw[ -> ] (c3) -- (1c3);
\draw[ -> ] (d3) -- (1d3);
\draw[ -> ] (f3) -- (1f3);
\draw[ -> ] (g3) -- (1g3);

\draw[ -> ] (a4) -- (1a4);
\draw[ -> ] (b4) -- (1b4);
\draw[ -> ] (c4) -- (1c4);
\draw[ -> ] (d4) -- (1d4);
\draw[ -> ] (f4) -- (1f4);
\draw[ -> ] (g4) -- (1g4);

\draw[ -> ] (start1) -- (1start1);
\draw[ -> ] (end1) -- (1end1);

\end{tikzpicture}

 \caption{Illustration of $\varphi,$ with edges removed and $D \subset V(C_{n',[\ell]})$ defined}
 \label{varphi}
 \end{figure}
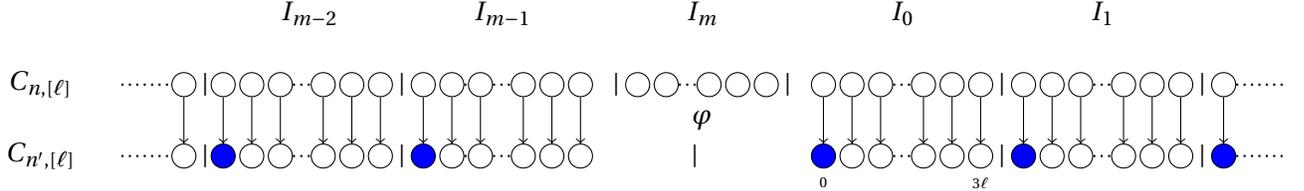 With regards to $C_{n,[\ell]},$ $D$ remains fixed since $I_m \cap D = \emptyset.$ Consider the intervals $I_0 = [0 , 3\ell ]$ and $I_1 = [3\ell +1, 6\ell +1 ].$ By construction, $0, 3\ell +1 \in D.$ Now shift the partition $\I$ so that $I_k = [ (3\ell +1)k + r+ 1, (3\ell+1)k + 3\ell + r + 1 \mod n]$ for $0\le k < m$ and $I_m = [(3\ell +1)m + r + 1 \mod n, (3\ell+1)m + 2r \mod n].$ Under $\varphi,$ notice that $\dist_H( \varphi(0) , \varphi(3\ell) ) = 3\ell + 1 - r.$ This shows that $D$ is not unique up to isomorphism in $C_{n',[\ell]},$ which contradicts Theorem \ref{lmindom}. Therefore $D$ cannot to be an exponential dominating set, see Figure \ref{varphi2} for an illustration of this contradiction. 
\begin{figure}[!htp]
\centering
\begin{tikzpicture}[scale = .95]

\node (z3) at (2.5, 0) {$|$};
\node (aa1) at (3.9,1) {$I_{m-1}$};
\node[circle, fill = white, draw] (a1) at (2.8,0) {};
\node[circle, fill = white, draw] (b1) at (3.2,0) {};
\node[circle, fill = white, draw] (c1) at (3.6,0) {};
\node[circle, fill = white, draw] (d1) at (4.2,0) {};
\node[circle, fill = white, draw] (e1) at (4.6,0) {};
\node[circle, fill = blue, draw] (f1) at (5,0) {};
\node (z3) at (5.5, 0) {$|$};
\draw[ thick, dotted ] (c1) -- (d1);

\draw[ thick, dotted ] (b1) -- (c1);
\node (aa1) at (6.7,1) {$I_{m}$};
\node[circle, fill = white, draw] (a2) at (5.8,0) {};
\node[circle, fill = white, draw] (c2) at (6.2,0) {};
\node[circle, fill = white, draw] (d2) at (6.8,0) {};
\node[circle, fill = white, draw] (f2) at (7.2,0) {};
\node[circle, fill = white, draw] (g2) at (7.6,0) {};
\node (z5) at (7.9, 0) {$|$};
\draw[ thick, dotted ] (c2) -- (d2);

\node (aa1) at (9.5,1) {$I_0$};
\node[circle, fill = white, draw] (a3) at (8.4,0) {};
\node[circle, fill = white, draw] (b3) at (8.8,0) {};
\node[circle, fill = white, draw] (c3) at (9.2,0) {};
\node[circle, fill = blue, draw] (d3) at (9.8,0) {};
\node[circle, fill = white, draw] (g3) at (10.6,0) {};
\node (z7) at (10.9, 0) {$|$};
\draw[ thick, dotted ] (c3) -- (d3)--(g3);

\node (aa1) at (12.3,1) {$I_1$};
\node[circle, fill = white, draw] (a4) at (11.2,0) {};
\node[circle, fill = white, draw] (b4) at (11.6,0) {};
\node[circle, fill = white, draw] (c4) at (12,0) {};
\node[circle, fill = blue, draw] (d4) at (12.6,0) {};
\node[circle, fill = white, draw] (f4) at (13,0) {};
\node[circle, fill = white, draw] (g4) at (13.4,0) {};
\node (z8) at (13.7, 0) {$|$};
\draw[ thick, dotted ] (c4) -- (d4);

\node (end) at (15,0){};
\node[circle, fill = white, draw] (end1) at (14,0) {};

\node (start) at (1.2,0){};
\node (aaaaa) at (0.5,0){$C_{n,[\ell]}$};
\node[circle, fill = white, draw] (start1) at (2.2,0) {};

\draw[ thick, dotted ] (start) -- (start1);
\draw[ thick, dotted ] (end1) -- (end);

\node (aa1) at (6.7,-0.5) {$\varphi$};

\node (z3) at (2.5, -1) {$|$};
\node[circle, fill = white, draw] (1a1) at (2.8,-1) {};
\node[circle, fill = white, draw] (1b1) at (3.2,-1) {};
\node[circle, fill = white, draw] (1c1) at (3.6,-1) {};
\node[circle, fill = white, draw] (1d1) at (4.2,-1) {};
\node[circle, fill = white, draw] (1e1) at (4.6,-1) {};
\node[circle, fill = blue, draw] (1f1) at (5,-1) {};
\node (z3) at (6.6, -1) {$|$};
\draw[ thick, dotted ] (1c1) -- (1d1);

\node[circle, fill = white, draw, label = below: \tiny $0$] (1a3) at (8.4,-1) {};
\node[circle, fill = white, draw] (1b3) at (8.8,-1) {};
\node[circle, fill = white, draw] (1c3) at (9.2,-1) {};
\node[circle, fill = blue, draw, label = below: \tiny $r$ ] (1d3) at (9.8,-1) {};
\node[circle, fill = white, draw, label = below: \tiny $3\ell$] (1g3) at (10.6,-1) {};
\node (z7) at (10.9, -1) {$|$};
\draw[ thick, dotted ] (1c3) -- (1d3) -- (1g3);

\node[circle, fill = white, draw] (1a4) at (11.2,-1) {};
\node[circle, fill = white, draw] (1b4) at (11.6,-1) {};
\node[circle, fill = white, draw] (1c4) at (12,-1) {};
\node[circle, fill = blue, draw] (1d4) at (12.6,-1) {};
\node[circle, fill = white, draw] (1f4) at (13,-1) {};
\node[circle, fill = white, draw] (1g4) at (13.4,-1) {};
\node (z8) at (13.7, -1) {$|$};
\draw[ thick, dotted ] (1c4) -- (1d4);

\node[circle, fill = white, draw] (1end1) at (14,-1) {};
\node (1start) at (1.2,-1){};
\node (aaaaa) at (0.5,-1){$C_{n',[\ell]}$};
\node[circle, fill = white, draw] (1start1) at (2.2,-1) {};
\node (1end) at (15,-1){};

\draw[ thick, dotted ] (1start) -- (1start1);
\draw[ thick, dotted ] (1end1) -- (1end);

\draw[ -> ] (a1) -- (1a1);
\draw[ -> ] (b1) -- (1b1);
\draw[ -> ] (c1) -- (1c1);
\draw[ -> ] (d1) -- (1d1);
\draw[ -> ] (e1) -- (1e1);
\draw[ -> ] (f1) -- (1f1);

\draw[ -> ] (a3) -- (1a3);
\draw[ -> ] (b3) -- (1b3);
\draw[ -> ] (c3) -- (1c3);
\draw[ -> ] (d3) -- (1d3);
\draw[ -> ] (g3) -- (1g3);

\draw[ -> ] (a4) -- (1a4);
\draw[ -> ] (b4) -- (1b4);
\draw[ -> ] (c4) -- (1c4);
\draw[ -> ] (d4) -- (1d4);
\draw[ -> ] (f4) -- (1f4);
\draw[ -> ] (g4) -- (1g4);
\end{tikzpicture}
 \caption{Illustration of why $D$ is not an exponential dominating set, with edges removed}
 \label{varphi2}
 \end{figure}
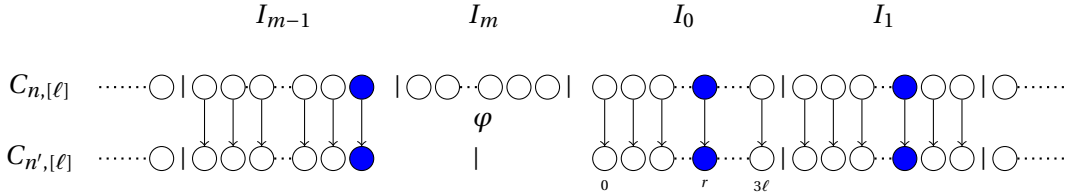
Consider $D' = D \cup v,$ where $v \in I_m.$ Observe that $\dist_H( d_k, d_{k+1}) \le 3\ell +1$ for consecutive $d_k, d_{k +1} \mod n \in D'.$  An application of Remark \ref{GenintDom} shows that $D'$ is a porous exponential dominating set for $C_{n,[\ell]}$ where $|D'| = m+1.$ Therefore $D'$ must be minimum. Additionally, Remark \ref{GenintDom} shows that $D'$ forms a non-porous exponential dominating set. Thus using (\ref{PNPinequality}) we have that \[ \left\lceil \frac{n}{3\ell + 1} \right\rceil \le \exd(C_{n,[\ell]}) \le \gamma_e(C_{n,[\ell]}) \le  \left\lceil \frac{n}{3\ell + 1} \right\rceil. \] \qed  
\end{subsection}
\end{section}

\section{Acknowledgements}
\label{sec:acknowledgements}
This research was supported in part by the National Science Foundation Award $\#$ 1719841. We would like to thanks Dr. Leslie Hogben for her input on this paper.

%%%%%%%%%%%%%%%%%%%%%%%%%%%%%%%%%%%%%%%%%%%%%%%%%%%%%%%%%%%%%%%%%%%%%%%%

\end{document}